\newtheorem{teo}{Theorem}
\newtheorem{lemma}[teo]{Lemma}
\newtheorem{coro}[teo]{Corollary}
\newtheorem{propo}[teo]{Proposition}
\newcommand{\R}{\mathbb{R}}
\newcommand{\ii}{\mathrm{i}}
\newcommand{\eps}{\epsilon}
\newcommand{\N}{\mathbb{N}}
\newcommand{\C}{\mathbb{C}}
\newcommand{\CP}{\mathbb{C}\textrm{P}}
\newcommand{\RP}{\mathbb{R}\mathrm{P}}
\newcommand{\Z}{\mathbb{Z}}
\newcommand{\sym}{\textrm{Sym}}
\newcommand{\Q}{\mathcal{Q}}
\theoremstyle{remark} 
\newtheorem{remark}[]{Remark}
\newtheorem{example}[]{Example}
\title[Complexity of intersections of real quadrics]{Complexity of intersections of real quadrics and topology of symmetric determinantal varieties}
\author{ A. Lerario}
\thanks{Department of Mathematics, Purdue University.}
\begin{document}

\maketitle

\begin{abstract}Let $X$ be the intersection in $\RP^n$ of $k$ quadrics, i.e. the zero locus of the homogeneous, degree two polynomials $q_1, \ldots, q_k$. Let also $W$ be the span of these polynomials in the space of all homogeneous degree two polynomials and for every $r\geq 0$ let $$\Sigma_W^{(r)}=\{q\in W\backslash\{ 0\}\,|\, \dim\ker (q)\geq r\}.$$ 
Notice that $\Sigma_W^{(1)}$ equals the (spherical) intersection of $W$ with the discriminant hypersurface in the space of quadratic polynomials; moreover for $r\geq 2$ and $W$ generic $\Sigma_W^{(r)}=\textrm{Sing}\big(\Sigma_W^{(r-1)}\big).$\\
We prove that for a generic choice of $q_1, \ldots, q_k$ the following formula holds for the total Betti number of $X$:
\begin{equation}\label{eqabstract}b(X)\leq b(\RP^n)+\frac{1}{2}\sum_{r\geq 1}b\big(\Sigma_W^{(r)}\big)\end{equation}
In the case we remove the nondegeneracy hypotesis the previous formula remains valid upon substitution of $\Sigma_W$ with a pertubation of it obtained by translating $W$ in the direction of a small negative definite quadratic form. The previous sum (both in the generic and the general case) contains at most $O(k)^{1/2}$ summands, as the sets $\Sigma_W^{(r)}$ turns out to be empty for $\binom{r+1}{2}\geq k.$ We study the topology of symmetric determinantal varieties, like the above $\Sigma_W^{(r)}$, and bound their homological complexities, with  particular interest at those defined on a sphere. Using formula (\ref{eqabstract}) and the results on the complexity of determinantal varieties, we prove the sharp bound:
$$b(X)\leq O(n)^{k-1}$$
thus improving Barvinok's style bounds (recall that the best previously known bound, due to Basu, has the shape $O(n)^{2k+2}$). 

\end{abstract}
 \section*{Acknowledgements} 
The author is grateful to S. Basu for many stimulating discussions and to A. A. Agrachev for his constant support. 
 \section{introduction}
 This paper addresses the question of bounding the topology of the set:
$$X=\textrm{intersection of $k$ quadrics in $\RP^n$}$$
More specifically we are interested in finding a bound for its \emph{homological complexity}  $b(X)$, namely the sum of its Betti numbers\footnote{From now on, unless differently specified, all homology and cohomology groups are assumed to be with coefficients in $\Z_2.$}.\\
The problem of bounding the topology of semialgebraic sets belonging to some specified family dates back to the work of J. Milnor, who proved that a semialgebraic set $S$ defined in $\R^n$ by $s$ polynomial inequalities of degree at most $d$ has complexity bounded by $b(S)\leq O(sd)^n$.
What is special about sets defined by \emph{quadratic inequalities} is that the two numbers $k$ and $n$ can be exchanged in the previous to give A. Barvinok's bound $b(S)\leq n^{O(s)}.$\\
This kind of \emph{duality} between the variables and the equations in the quadratic case is the leading theme of this paper, as we will discuss in a while.\\
Barvinok's bound, and its subsequent improvements made by M-F. Roy, S. Basu and D. Pasechnik in \cite{BaPaRo} and M. Kettner and S. Basu in \cite{BaKe}, concerns with sets defined by $s$ quadratic \emph{inequalities}. The most refined estimate for the complexity of such sets is polynomial in $n$ of degree $s$, but since we need two inequalities to produce an equality, this bound when applied to the set $X$ of our interest produces:
$$\textrm{Basu's bound:}\quad b(X)\leq O(n)^{2k+2}$$
In this paper we focus on the algebraic case rather than the semialgebraic one. From the viewpoint of classical algebraic geometry our problem can be stated as follows. We are given a linear system $W$ of real quadrics, i.e. the span of $k$ quadratic forms in the space of all homogeneous degree two polynomials, and we consider the base locus $X=X_W$ of $W$, i.e. the set of points in $\RP^n$ where all these forms vanish. What we consider to be the \emph{dual} object to the previous one is the set $\Sigma_W$ of critical points of $W$, i.e. the set consisting of those nonzero elements in $W$ that are degenerate. More generally we can consider the set $\Sigma_W^{(r)}$ of those nonzero quadratic forms in $W$ whose kernel has dimension at least $r$. Using this notation one of the main results of this paper is the following formula, which holds in the case $W$ is generic:
\begin{equation}\label{introboundgeneric}b(X_W)\leq b(\RP^n)+\frac{1}{2}\sum_{r\geq1} b\big(\Sigma_W^{(r)}\big).\end{equation}
The previous sum is finite, since for a generic $W$ the set $\Sigma_W^{(r)}$ is empty for $\binom{r+1}{2}\geq k$. In fact we notice that for every  natural $r$ and a generic $W$ the set $\Sigma_{W}^{(r+1)}$ coincides with the set of singular points of $\Sigma_W^{(r)}$; the codimension of this singular locus is exactly $\binom{r+1}{2}$, thus it is empty for $r>\frac{1}{2}(-1+\sqrt{8k-7}).$ If we remove the genericity assumption a similar formula can be proved, but a \emph{perturbation} of $\Sigma_W$ is introduced. More specifically we have to translate $W$ in the direction of a small negative definite quadratic form $-\eps q$, getting in this way an \emph{affine} space $W_\eps=W-\eps q$. We consider then the unit sphere in $W_\eps$ and the set $\Sigma_\eps^{(r)}$ of quadratic forms on this sphere where the kernel has dimension at least $r.$ The following formula holds (now for \emph{any} $X$):
\begin{equation}\label{introbound}b(X)\leq b(\RP^n)+\frac{1}{2}\sum_{r\geq1}b\big(\Sigma_\eps^{(r)}\big)\end{equation}
The same remark on codimensions  as above applies here and this sum is actually finite, containing no more than $O(k)^{1/2}$ summands. For a generic choice of $W$ these two constructions coincide, since the set $\Sigma_W$ deformation retracts on its intersection with any unit sphere in $W$. We will adopt this notation in the sequel and think at $\Sigma_W$ as at its homotopy equivalent intersection with a sphere.\\
Notice that from the point of view of \emph{complex} algebraic geometry the bound of (\ref{introboundgeneric}), even translated to the complex setting, does not say anything interesting since the generic case coincides with the complete intersection one, whose topology is determined. On the contrary in the real setting the topology of $X$, even a smooth one, can vary dramatically and our bound becomes more effective.
\vspace{0,3cm}
\begin{center}
\scalebox{0.6} 
{
\begin{pspicture}(0,-6.05)(21.391895,6.05)
\pspolygon[linewidth=0.04,fillstyle=solid](7.166867,-4.3596473)(1.0786866,-4.340118)(4.0778604,-0.0593187)
\pspolygon[linewidth=0.04,fillstyle=solid](0.0,-4.7502356)(7.26,-4.77)(7.8798966,4.9949408)(0.6033332,4.9949408)
\pspolygon[linewidth=0.04,fillstyle=solid](0.93242395,4.2528234)(7.067329,4.28489)(4.0778604,-0.0593187)
\psbezier[linewidth=0.04,fillstyle=solid](0.90721154,4.47054)(0.8072115,3.0053408)(7.1126237,3.0596006)(7.1126237,4.5448003)(7.1126237,6.03)(1.0072116,5.9357395)(0.90721154,4.47054)
\psline[linewidth=0.04cm](1.2066664,4.9949408)(6.8926244,4.9949408)
\psline[linewidth=0.03cm,linestyle=dashed,dash=0.16cm 0.16cm](1.243232,5.014471)(4.0770698,-0.04364715)
\psline[linewidth=0.03cm,linestyle=dashed,dash=0.16cm 0.16cm](6.8560586,4.9949408)(4.0770698,-0.04364715)
\psline[linewidth=0.04cm](4.0770698,-0.06317658)(6.691513,-4.7502356)
\psline[linewidth=0.03cm,linestyle=dashed,dash=0.16cm 0.16cm](4.0770698,-0.06317658)(0.8592927,-4.711177)
\psline[linewidth=0.03cm,linestyle=dashed,dash=0.16cm 0.16cm](0.98,4.17)(7.22,-4.4300003)
\psline[linewidth=0.04cm](4.0770698,-0.06317658)(1.4626259,-4.7502356)
\psbezier[linewidth=0.04,fillstyle=solid](1.46,-4.73)(1.5,-6.03)(6.68,-5.9700003)(6.68,-4.7183723)
\psline[linewidth=0.04cm,arrowsize=0.05291667cm 2.0,arrowlength=1.4,arrowinset=0.4]{->}(4.08,-0.09000007)(7.74,2.6699998)
\psline[linewidth=0.04cm,arrowsize=0.05291667cm 2.0,arrowlength=1.4,arrowinset=0.4]{->}(4.12,-0.11000008)(7.54,-0.51000005)
\usefont{T1}{ptm}{m}{n}
\rput(8.29291,2.8149998){$q_1$}
\usefont{T1}{ptm}{m}{n}
\rput(8.17291,-0.4650001){$q_2$}
\usefont{T1}{ptm}{m}{n}
\rput(0.73291016,-3.265){$W$}
\usefont{T1}{ptm}{m}{n}
\rput(2.77291,-3.825){$\Sigma_W$}
\usefont{T1}{ptm}{m}{n}
\rput(4.81291,3.0549998){$\Sigma$}
\pscircle[linewidth=0.04,dimen=outer](15.42,-0.1300001){2.44}
\psline[linewidth=0.04cm,linestyle=dashed,dash=0.16cm 0.16cm](13.06,3.85)(17.74,-4.07)
\psline[linewidth=0.04cm,linestyle=dashed,dash=0.16cm 0.16cm](17.74,3.8899999)(13.18,-3.99)
\psdots[dotsize=0.12](14.2,1.9499999)
\psdots[dotsize=0.12](16.62,1.9499999)
\psdots[dotsize=0.12](16.66,-2.23)
\psdots[dotsize=0.12](14.22,-2.23)
\usefont{T1}{ptm}{m}{n}
\rput(18.99291,3.8149998){$W$}
\usefont{T1}{ptm}{m}{n}
\rput(19.481455,-3.2450001){$W\cap\{\ker(q)\neq 0\}$}
\usefont{T1}{ptm}{m}{n}
\rput(13.57291,-2.305){$\Sigma_W$}
\psline[linewidth=0.04cm,linestyle=dashed,dash=0.16cm 0.16cm](1.46,-4.75)(6.66,-4.79)
\end{pspicture} 
}
\end{center}

\vspace{0.3cm}
The previous bounds (\ref{introboundgeneric}) and (\ref{introbound}) are just the mirror of the mentioned duality between the equations and the variables; the real tool encoding this duality is a spectral sequence introduced by A. A. Agrachev in \cite{Agrachev1} and developed by him and the author in \cite{AgLe}.\\
The powerful of this bounds is that they are intrinsic and different $X$ might produce different ones: for example a set $W$ whose nonzero forms have constant rank has base locus with complexity bounded by $b(\RP^n)$. On the other hand these bounds are sufficiently general to produce sharp numerical estimates. Indeed using them we can get the following, which improve Basu's one:
\begin{equation}\label{intronum}b(X)\leq O(n)^{k-1}\end{equation} 
This estimate is sharp in the following sense: if we let $B(k,n)$ be the maximum of the complexities over \emph{all} intersections of $k$ quadrics in $\RP^n$, then:
 \begin{equation}\label{B}B(k,n)=O(n)^{k-1}\end{equation}
 The upper bound for $B(k,n)$ is provided by (\ref{intronum}) and the lower one by the existence of a maximal real complete intersection, i.e. a complete intersection $M$ of $k$ real quadrics in $\CP^n$ satisfying $b(M_\R)=b(M).$ Such a complete intersection for $k\geq 2$ has the property:
 $$b(M_\R)=c_kn^{k-1}+O(n)^{k-2}$$ 
 (the smooth nonsingular quadric in $\CP^n$ has total Betti number equal to $n+\frac{1}{2}(1+(-1)^{n+1})).$
 The problem of determining the leading coefficient $c_k$ is far from trivial; we list as an example the first small values of $c_k$ starting from $k=2:$
 $$2, 1, \frac{2}{3}, \frac{1}{3}, \frac{2}{15}, \ldots$$
 For small values of $k$ the leading coefficient we get expanding in $n$ the r.h.s. of (\ref{introboundgeneric}) is the same as the complete intersection one. This provides:
 $$B(1,n)=n,\quad B(2,n)=2n,\quad B(3,n)=n^2+O(n)$$
 We conjecture that in general for $k\geq 2$ we have $B(k,n)=c_kn^{k-1}+O(n^{k-2}).$ The way this conjecture can be tackled, and indeed the way we produced the numerical bound (\ref{intronum}), is the study of the topology of symmetric determinantal varieties. In fact each set $\Sigma_\eps^{(r)}$ is defined by the vanishing of some minors of a symmetric matrix depending on parameters (in our case the parameter space is the unit sphere in $W_\eps$). The geometry of symmetric determinantal varieties over the complex numbers was studied in \cite{HarrisTu}, where the degrees of such varieties were explicitly computed. Here we do not need this degree computation; though we use the fact that determinantal varieties are defined by (possibly many) polynomials of small degree. Such a property, combined with Milnor's classical bound, produces  the general estimate:
 \begin{equation}\label{sigmabound}b\big(\Sigma_\eps^{(r)}\big)\leq (2n)^{k-1}+O(n)^{k-2}\end{equation}
Notice that if we plug this into (\ref{introboundgeneric}) we immediatley get $B(k,n)\leq O(n)^{k-1}$ (this follows at once using the fact that there are less than $O(k)^{1/2}$ terms in the sum we consider). On the other hand such algebraic sets, among those defined by polynomials of degree less than $n$, are very special. For example they have unavoidable singularities - that is the reason for the appearence of higher order terms in (\ref{introboundgeneric}). This is why we expect the leading coefficient of the bound (\ref{sigmabound}) not to be optimal. In fact for $k=1,2,3$ we bounded the complexities of these varieties with a direct argument, getting the optimal coefficient.
\\As an example for these ideas we compute the cohomology of the set $\Sigma$ of nonzero symmetric matrices with zero determinant. This set coincides with the discriminant hypersurface of homogeneous polynomials of degree two. The degree of this hypersurface is $n$ and Milnor's bound give $b(\Sigma)\leq O(n)^{\binom{n}{2}}.$ Though $\Sigma$ happens to be Spanier-Whitehead dual to a disjoint union of grassmannians and:
\begin{equation}\label{discriminant}H^*(\Sigma)\simeq \bigoplus_{j=0}^nH_*(\textrm{Gr}(j,n))\end{equation}
In particular the complexity of $\Sigma$ is exactly $2^n,$ a number which is much smaller than Milnor's prediction.\\
The paper is organized as follows. Section 2 gives an account of the known numerical bounds present in the literature. Section 3 introduces the spectral sequence approach, from which it is shown to be possible to recover Barvinok's bound. Section 4 deals with symmetric determinantal varieties and contains the proof of (\ref{discriminant}). Section 5 is the technical bulk of the paper and deals with the transversality arguments needed in order to prove (\ref{introbound}). Section 6 contains the proof of (\ref{introboundgeneric}) and (\ref{introbound}). Section 7 contains the proof of the numerical translation (\ref{intronum}) of the previous bounds as well as the discussion on its sharpness. Section 8 discusses some examples.\\
From now on all algebraic sets are assumed to be \emph{real} (in particular projective spaces and Grassmannians are the real ones) unless differently specified. Similarly all homology and cohomology groups are with coefficients in $\Z_2$.
 \section{Complexity of intersections of real quadrics}
The aim of this section is to review the numerical bounds that can be derived from the literature for the homological complexity of $X$.\\
The first result is due to J. Milnor, who proved that if $Y$ is an algebraic set defined by homogeneous polynomials of degree at most $d$ in $\RP^n$, then $b(Y)\leq n d(2d-1)^{n-1}$ (this is the content of Corollary 3 of \cite{Milnor}). If we apply this bound to the set $X$ we immediately get the following estimate
$$\textrm{Milnor's bound:}\quad b(X)\leq 2n 3^n$$
What is interesting about this bound is that it does not depend on the \emph{number} of equations defining $Y$ (respectively $X$), but only on their degrees.\\
On a different perspective A. Barvinok studied the complexity of basic semialgebraic subsets of $\R^n$ defined by a fixed number of inequalities of degree at most two. Using the main result from \cite{Ba} we can derive another bound, whose shape is different from the previous one\footnote{According to \cite{BPR} in this context the notation $f(l)=O(l)$ means that there exists a natural number $b$ such that the inequality $f(l)\leq bl$ holds for every $l\in \N$.}

$$\textrm{Barvinok's bound:}\quad b(X)\leq n^{O(k)}$$
\begin{proof}Theorem (1.1) of \cite{Ba} states the following: for every $k\in \mathbb{N}$ there exists a polynomial $P_k$ of degree $O(k)$ such that for any semialgebraic set $Y$ defined by $k$ inequalities of degree at most two in $\R^n$ we have $b(Y)\leq 				P_k(n)=n^{O(k)}.$\\
We decompose our $X$ into its affine part $A$ and its part at infinity $B$ and we use a Mayer-Vietoris argument. More specifically we let $A=X\cap \{x_0\neq 0\}$ and $B=X\cap \{x_0^2\leq \eps\}$. Now $A$ is defined by $2k$ quadratic inequalities in $\R^n$ (each equation is equivalent to a pair of inequalities) and $B$ by $k$ quadratic equations in $\RP^n$ (in fact this set for small $\epsilon$ deformation retracts to $X\cap\{x_0=0\}$). The intersection $A\cap B$ is defined in $\R^n$ by $2k+1$ quadratic inequalities: those defining $A$ plus the one defining a big ball. We apply now Theorem (1.1) of \cite{Ba} to $A$ and $A\cap B$ to get a bound of the form $n^{O(k)}$ for their total Betti numbers. Induction on $n$ and the Mayer-Vietoris long exact sequence of the semialgebraic pair $(A,B)$ finally give:
$$b(X)\leq b(A)+b(B)+b(A\cap B)\leq n^{O(k)}$$ \end{proof}
The subtlety about the previous bound is the implied constant in its definition: indeed in Theorem (1.1) of \cite{Ba} this implied constant is at least two. This provides an implied constant of at least \emph{four} in Barvinok's bound. The work \cite{BaKe} of S. Basu and M. Kettner provides a better estimate for this constant:
$$\textrm{Basu's bound}:\quad b(X)\leq O(n)^{2k+2}$$
\begin{proof}Corollary 1.7 of \cite{BaKe} states the following: let $S$ be a semialgebraic subset of $\R^n$ defined by $k$ quadratic inequalities. Then 
		$$b(S)\leq \frac{n}{2}\sum_{j=0}^k \binom{k}{j} \binom{n+1}{j}2^j=s(k,n)$$
Let us show first that $s(k,n)$ behaves asymptotically as $O(n)^{k+1}$; indeed let us prove that $\lim_n \frac{\log s(k,n)}{\log n}=k+1$.
 Notice first that for every $k$ there exists $C_k>0$ such that for every $n$ 
		$$\binom{n+1}{k}\leq \sum_{j=0}^k\binom{k}{j} \binom{n+1}{j}2^j\leq C_k \binom{n+1}{k}$$
		The existence of such a $C_k$ is due to the fact the the number of terms we are adding and the number $\binom{k}{j}$ do not depend on $n$ but only on $k$.
		Using Stirling's asymptotic at infinity $n!\sim \sqrt{2\pi n}(\frac{n}{e})^n$ we can write:
		$$\binom{n+1}{k}\sim \frac{1}{k! e^{k}}\frac{(n+1)^{n+1}}{(n+1-k)^{n+1-k}}\sqrt{\frac{n+1}{n+1-k}}\sim A_k n^{k}$$
		for some constant $A_k>0$. The inequalities $\frac{n}{2}\binom{n+1}{k}\leq s(k,n)\leq \frac{C_k n}{2}\binom{n+1}{k}$ and the previous asymptotic immediately give the limit.\\
		Proceeding now as in the proof of Barvinok's estimate, i.e. decomposing $X$ into its affine and infinity part and using Mayer-Vietoris bounds, provides the result.
	\end{proof}
The bound provided in \cite{BaKe} is the best known for semialgebraic sets defined by quadratic \emph{inequalities}. Surprisingly enough it turns out that in the special case of our interest, i.e. algebraic sets, the exponent of Basu's bound can be lowered down to $k-1$. This will be a straightforward consequence of a deeper approach for bounding the topology of $X$ with the complexity of some determinantal varieties associated (and in a certain sense \emph{dual}) to it. This is based on a spectral sequence argument and has strong consequences, besides the framework of bounding the topology of $X$.

\section{The spectral sequence approach}
In this section we will discuss a different approach for the study of the intersection of real quadrics. This was first introduced by A. A. Agrachev in \cite{Agrachev1} and \cite{Agrachev2} for the nonsingular case and then extended in \cite{AgLe} for the general case.\\
Let $\Q_{n}$ denote the vector space of homogeneous polynomials of degree two in $n$ variables, i.e. the space of quadratic forms over $\R^n$. Then $X$ is the zero locus in in the projective space of the elements $q_1, \ldots, q_k \in\Q_{n+1}$ and we consider the \emph{linear system}\footnote{In classical algebraic geometry $X$ is referred to as the base locus of $W$.} defined by these elements:
$$W=\textrm{span}\{q_1, \ldots, q_k\} \subset \Q_{n+1}$$
For a given quadratic form $p\in \Q_{n+1}$ we denote by $\ii^{+}(p)$ its positive inertia index, namely the maximal dimension of a subspace of $\R^{n+1}$ such that the restriction of $p$ to it is positive definite. The idea of the spectral sequence approach is to replace the geometry of $X$ with the one of the restriction of the function $\ii^+$ to $W$. More precisely let us consider the sets:
$$W^j=\{q\in W \,|\, \ii^+ (q)\geq j\},\quad j\geq1$$ 
Notice that none of these sets contains the zero and all of them are invariant under multiplication by a positive number, hence they deformation retracts to their intersections with a unit sphere in $W$ (with respect to any scalar product):
$$\Omega^j=W^j \cap \{\textrm{any unit sphere in $W$}\},\quad j\geq1$$
Even if not canonical (it depends on the choice of a scalar product on $W$) sometimes it is more convenient to use this family rather than the previous one; notice though that different scalar products will produce homeomorphic families. The spirit of this approach is to exploit the relation between $X$ and the filtration
 $$\Omega^{n+1}\subseteq \Omega^n \subseteq \cdots\subseteq \Omega^2\subseteq \Omega^1$$
 This is made precise by a Leray's spectral sequence argument, and produces the following
\begin{teo}$$\emph{\textrm{Agrachev's bound}}:\quad b(X)\leq n+1+\sum_{j\geq0}b(\Omega^{j+1})$$
\end{teo}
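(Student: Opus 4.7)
The plan is to exploit the double fibration that underlies Agrachev's inequality. I introduce the incidence space
$$B = \{(\omega, [x]) \in \Omega \times \RP^n \,:\, \omega(x) > 0\}$$
(well defined on $\RP^n$ because $\omega$ is positively homogeneous of degree two), together with its two projections $\pi_1: B \to \Omega$ and $\pi_2: B \to \RP^n$. The strategy is to bound $b(X)$ by $b(\RP^n)+b(B)$ using one projection, and $b(B)$ by $\sum_{j \geq 0} b(\Omega^{j+1})$ using the other.

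\textbf{Step 1 (projection to $\RP^n$).} For $[x] \in X$ every $\omega \in W$ vanishes at $x$, so the fiber of $\pi_2$ is empty; for $[x] \notin X$ the functional $\omega \mapsto \omega(x)$ is a nonzero linear form on $W$, and the condition $\omega(x)>0$ cuts out a contractible open hemisphere of $\Omega \simeq S^{k-1}$. Hence $R^q (\pi_2)_* \Z_2 = 0$ for $q \geq 1$, while $R^0 (\pi_2)_* \Z_2 = j_! \Z_2$ is the extension by zero from $\RP^n \setminus X$. The Leray spectral sequence collapses at $E_2$ and identifies $H^*(B) \cong H^*(\RP^n, X)$, so the long exact sequence of the pair yields
$$b(X) \leq b(\RP^n) + b(\RP^n, X) = (n+1) + b(B).$$

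\textbf{Step 2 (projection to $\Omega$).} For $\omega \in \Omega$ with positive inertia $m = \ii^+(\omega)$ the fiber $F_\omega = \{[x] \in \RP^n : \omega(x) > 0\}$ deformation retracts, by collapsing the nonpositive eigenspace of $\omega$, onto $\RP^{m-1}$ (empty when $m = 0$). Because $\ii^+$ is lower semicontinuous, each $\Omega^{q+1} = \{\ii^+ \geq q+1\}$ is closed in $\Omega$; the stalk of $R^q (\pi_1)_* \Z_2$ at $\omega$ is $H^q(\RP^{m-1}) = \Z_2$ exactly when $m \geq q+1$, and the natural inclusions $\RP^{m-1} \hookrightarrow \RP^{m'-1}$ with $m' \geq m$ induce the identity on $H^q$ for $q \leq m-1$. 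Hence $R^q (\pi_1)_* \Z_2$ is the pushforward of the constant sheaf under the closed inclusion $\Omega^{q+1} \hookrightarrow \Omega$, so
$$E_2^{p,q} = H^p\bigl(\Omega;\, R^q(\pi_1)_* \Z_2\bigr) = H^p(\Omega^{q+1}).$$
Summing Betti numbers across this $E_2$ page, which converges to $H^*(B)$, produces
$$b(B) \leq \sum_{p,q} \dim E_2^{p,q} = \sum_{q \geq 0} b(\Omega^{q+1}),$$
and combining with Step 1 gives Agrachev's bound.

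\textbf{Main obstacle.} The delicate ingredient is the identification of the Leray sheaf in Step 2: the family $\{F_\omega\}$ is not a locally trivial fibration, because positive eigenspaces do not depend continuously on $\omega$; they can only jump upward in dimension. The strategy is therefore not to work with a genuine bundle but with the \emph{closed} sets $\Omega^{q+1}$, whose good behavior is exactly the content of lower semicontinuity of $\ii^+$. The technical work is to verify that the retractions $F_\omega \simeq \RP^{\ii^+(\omega)-1}$ can be arranged compatibly on small neighborhoods where $\ii^+$ is locally constant, and that they glue consistently along the jump locus, so that $R^q(\pi_1)_*\Z_2$ really is the constant sheaf on $\Omega^{q+1}$ extended by zero.
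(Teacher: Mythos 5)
Your framework (double fibration via the incidence variety $B$, Leray spectral sequence for each projection) is the same as the paper's, but both steps contain concrete errors in the justification.

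In Step 1 the identification $R^0(\pi_2)_*\Z_2 = j_!\Z_2$ is false. The stalk of $R^0(\pi_2)_*\Z_2$ at a point $[x]\in X$ is $\varinjlim_{V\ni [x]} H^0(\pi_2^{-1}(V))$, and $\pi_2^{-1}(V)$ is nonempty for every neighborhood $V$ of $[x]$ (it surjects onto $V\setminus X\neq\emptyset$), so that stalk does not vanish; and the higher direct images need not vanish there either, since $\pi_2^{-1}(V)$ is homotopy equivalent to $V\setminus X$, which is generally not contractible. What is actually used in the paper is that $\pi_2$ restricts to a map onto the open set $\RP^n\setminus X$ with contractible fibers, hence a homotopy equivalence $B\simeq\RP^n\setminus X$; the bound $b(X)\leq n+1+b(B)$ is then obtained not from the pair $(\RP^n,X)$ but from Alexander--Pontryagin duality $H_{n-*}(X)\simeq H^*(\RP^n,\RP^n\setminus X)$ together with the long exact sequence of the pair $(\RP^n,\RP^n\setminus X)$. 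Your numerical conclusion is the same, but the route you describe does not establish it.

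In Step 2 the semicontinuity is stated backwards, which defeats the proposed sheaf-theoretic identification. Since strictly positive eigenvalues persist under small perturbations, $\ii^+$ is indeed lower semicontinuous; but then $\Omega^{q+1}=\{\ii^+\geq q+1\}$ is \emph{open} in $\Omega$, not closed, as you yourself observe in the last paragraph when you write that positive eigenspaces ``can only jump upward in dimension.'' A sheaf supported on the open set $\Omega^{q+1}$ is an extension by zero $j_!\mathcal G$, and for such a sheaf $H^p(\Omega;j_!\mathcal G)$ is not $H^p(\Omega^{q+1};\mathcal G)$ in general, so the equality $E_2^{p,q}=H^p(\Omega^{q+1})$ cannot be read off formally as ``pushforward along a closed inclusion.'' That identification is genuinely delicate and is the content of Theorem A of \cite{AgLe}, to which the paper defers; your ``Main obstacle'' paragraph correctly locates the issue, but the closed-set premise you propose to resolve it with is the wrong one.
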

\begin{proof}Let $S_W$ be any unit sphere in $W$ and consider the topological space $B=\{(\omega,[x])\in S_W\times \RP^{n}\,|\,  (\omega q)(x)>0\}$ together with its two projections $p_{1}:B\to S_W$ and $p_{2}:B\to \RP^{n}.$ The image of $p_{2}$ is easily seen to be $\RP^{n}\backslash X$ and the fibers of this map are contractible sets, hence $p_{2}$ gives a homotopy equivalence $B\sim\RP^{n}\backslash X.$ Consider now the projection $p_{1};$ for a point $\omega \in S_W$ the fiber $p_{1}^{-1}(\omega)$ has the homotopy type of a projective space of dimension $\ii^{+}(\omega q)-1$, thus the Leray spectral sequence for $p_{1}$ converges to $H^{*}(\RP^{n}\backslash X)$ and has the second term $E_{2}^{i,j}$ isomorphic to $H^{i}(\Omega^{j+1})$. A detailed proof of the previous statements can be found in \cite{AgLe}. Since $\textrm{rk}(E_{\infty})\leq \textrm{rk}(E_{2})$ then $b(\RP^{n}\backslash X)\leq \sum_{j\geq0} b(\Omega^{j+1})$. Recalling that by Alexander-Pontryagin duality $H_{n-*}(X)\simeq H^{*}(\RP^{n},\RP^{n}\backslash X),$ then the exactness of the long cohomology exact sequence of the pair $(\RP^{n},\RP^{n}\backslash X)$ gives the desired inequality. \end{proof}
It interesting to notice that Agrachev's bound implies Barvinok's one. In fact let us fix a scalar product on $\R^{n+1}$; then the rule $\{\forall x\in \R^{n+1}\,:\,\langle x, Q x\rangle =q(x)\}$ defines a symmetric matrix $ Q$ whose number of positive eigenvalues equals $\ii^{+}(q).$ Consider the polynomial 
$$f(t,Q)=\det (Q-tI)=a_{0}(Q)+\cdots+a_{n}(Q)t^{n}\pm t^{n+1}$$ defined over $\R \times W=\R\times \textrm{span}\{Q_1, \ldots, Q_k\}$. Then by Descartes' rule of signs  the positive inertia index of $Q$ is given by the sign variation in the sequence $(a_{0}(Q), \ldots, a_{n}(Q)).$ Thus the sets $\Omega^{j+1}$ are defined on the unit sphere in $W$ by sign conditions (quantifier-free formulas)  whose atoms belong to a set of $n+1$ polynomials in $k$ variables and of degree less than $n+1$. For such sets we have the estimate, proved in \cite{BPR}: $b(\Omega^{j+1})\leq  n^{O(k)}$. Putting all this together we get:
	$$b(X)\leq n+1+\sum_{j\geq 0}b(\Omega^{j+1})\leq n^{O(k)}$$
\begin{example}\label{toy}Before going on we give an idea of the direction this spectral sequence approach will lead us; this will be just a motivation for next sections, the detailed theory being developed in the final part of the paper.
Let $S_W$ be the unit sphere in $W$ and let us assume that the set $$\Sigma_W=\{q\in S_W \,|\, \ker (q)\neq 0\}$$ is a \emph{smooth} manifold and each time we cross it the index function changes exactly by $\pm 1$. Then the components of this manifold are exactly the boundaries of the sets $\Omega^j$ and $b(\Sigma_W)=\sum_{\Omega^j \neq S_W}b(\partial \Omega^{j})$. On the other hand each $\partial \Omega^j$ is a submanifold of the sphere and it is not difficult to show that $b(\partial \Omega^j)=2 b(\Omega^j)$ (we will give an argument for this in Lemma \ref{boundary}). Combining all this together into Agrachev's bound we get:
\begin{equation}\label{toyeq}b(X)\leq n+1+\frac{1}{2}b(\Sigma_W)\end{equation}
which relates the topology of $X$ (the base locus of $W$) to the topology of $\Sigma_W$ (the singular locus of $W$). In the general case $\Sigma_W$ will not be smooth, nor the index function well behaving and a more refined approach is needed. This approach is based on the study of the topology of $\Sigma_W$ and its singularities. These two objects are very particular algebraic sets: they are defined by the set of points where a family of matrices has some rank degeneracy, i.e. they are determinantal varieties.
\end{example}

\section{Symmetric determinantal varieties}
The  aim of this section is to bound the topology and describe some geometry of symmetric determinantal varieties. In a broad sense these will be defined by rank degeneracy conditions of (algebraic) families of symmetric matrices. Recall that our interest comes from families of quadratic forms; the way to switch to symmetric matrices is simply by establishing a linear isomorphism between $\Q_n$ and $\sym_{n}(\R)$. This can be done, once a scalar product on $\R^n$ has been fixed, by associating to each quadratic form $q$ the matrix $Q$ defined by:
$$q(x)=\langle x, Qx \rangle \quad\forall x\in \R^n$$
Notice that the dimension of the vector space $\sym_{n}(\R)$ is $\binom{n}{2}.$\\
Suppose now that $Y$ is an algebraic subset of $\sym_n(\R)$; for every natural number $r$ we define its rank degeneracy locus
$$Y^{(r)}=\{Q\in Y\, |\,\textrm{dim} \ker(Q)\geq r\}$$
Using the bound of \cite{Milnor} we can immediately prove the following proposition, which exploits the idea that symmetric determinantal varieties have relatively simple topology.
\begin{propo}\label{bounddet}Let $Y$ be defined by polynomials of degree less than $d$ in $\sym_n(\R)$ and $\R^k$ be a subspace; let also $\delta=\max \{d, n-r+1\}$. Then:
$$b\big(Y^{(r)}\cap \R^k\big)\leq \delta (2\delta-1)^{k-1}$$
\begin{proof} The set $Y^{(r)}$ is defined in $\sym_n(\R)$ be the same equations defining $Y$ plus all the equations for the vanishing of minors of order $r+1$; these last equations have degree $r+1$. Once we intersect $Y^{(r)}$ with a linear space of dimension $k$, we get a set defined by equations of degree at most $\delta$ in $k$ variables and Milnor's estimate applies.
\end{proof}
\end{propo}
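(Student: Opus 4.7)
The statement is essentially a direct application of Milnor's affine bound to an explicit system of defining equations, so my plan is to (i) exhibit polynomial equations cutting out $Y^{(r)}$ inside $\sym_n(\R)$, (ii) restrict to the subspace $\R^k$ keeping track of degrees, and (iii) invoke Milnor's estimate.

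First I would translate the rank-degeneracy condition into equations. The requirement $\dim\ker(Q)\geq r$ is equivalent to $\mathrm{rank}(Q)\leq n-r$, which in turn is equivalent to the simultaneous vanishing of every $(n-r+1)\times(n-r+1)$ minor of $Q$. Each such minor is a polynomial of degree $n-r+1$ in the entries of $Q$. Hence $Y^{(r)}$ is the zero locus in $\sym_n(\R)$ of the polynomials defining $Y$ (of degree strictly less than $d$) together with these minors (of degree $n-r+1$).

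Next I would restrict to the linear subspace $\R^k\subset \sym_n(\R)$. Choosing a basis of $\R^k$ yields linear coordinates; under the corresponding parametrization every entry of $Q$ becomes a linear function of those $k$ coordinates, so each defining polynomial pulls back to a polynomial on $\R^k$ of degree no larger than the original. Consequently $Y^{(r)}\cap \R^k$ is the zero locus in $\R^k$ of a finite collection of polynomials of degree at most $\delta=\max\{d,n-r+1\}$. Applying the affine version of Milnor's bound (as invoked earlier in the paper) to this algebraic subset of $\R^k$ yields the desired inequality $b\bigl(Y^{(r)}\cap\R^k\bigr)\leq \delta(2\delta-1)^{k-1}$.

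I do not see a serious obstacle here: the proof is essentially degree bookkeeping followed by a citation. The only step that demands mild care is the minor-size count, namely recognizing that the correct order of vanishing minors corresponding to kernel dimension at least $r$ is $(n-r+1)\times(n-r+1)$ and hence contributes degree $n-r+1$ — this is what enters the definition of $\delta$ and what makes the bound useful when $n-r+1$ dominates $d$.
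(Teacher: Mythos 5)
Your proof is correct and follows the same strategy as the paper: cut out $Y^{(r)}$ by adjoining the vanishing of minors to the equations of $Y$, restrict to $\R^k$, and apply Milnor's affine bound. In fact your write-up is more careful than the paper's on the one point where care is required: the paper's proof states that $Y^{(r)}$ is defined by ``the vanishing of minors of order $r+1$; these last equations have degree $r+1$,'' but since $\dim\ker(Q)\geq r$ is equivalent to $\mathrm{rank}(Q)\leq n-r$, the correct condition is the vanishing of all $(n-r+1)\times(n-r+1)$ minors, which have degree $n-r+1$. You identified this correctly, and indeed your version is the one that is consistent with the definition $\delta=\max\{d,n-r+1\}$ in the statement — the paper's phrasing ``order $r+1$'' is evidently a slip of the pen, as ``degree $r+1$'' would not produce the stated $\delta$.
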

Let us fix now a scalar product also on the space $\sym_n(\R)$, e.g. we can take $\langle A, B\rangle =\frac{1}{2} \textrm{tr}(AB)$. We consider the set of singular matrices of norm one:
$$\Sigma=\{\|Q\|^2=1 \,,\, \det (Q)=0\}$$
This set is a deformation retract of the set of nonzero matrices with determinant zero and for $n>1$ is defined by equations of degree at most $n$ in $\sym_n(\R)$. The previous proposition would produce a bound of the form $an^{\binom{n}{2}}$ for its topological complexity; indeed the bound is much better, as shown in the next theorem.
\begin{teo}
$$H^*(\Sigma)\simeq \bigoplus_{j=0}^n H^*(\emph{\textrm{Gr}}(j,n))\quad \textrm{and} \quad b(\Sigma)=2^n$$
\begin{proof}In the space of all symmetric matrices let us consider the open set $A$ where the determinant does not vanish; this set deformation retracts to $S^N\backslash \Sigma$ and by Alexander-Pontryagin duality it follows that:
\begin{equation}\label{sigma}H^*(\Sigma)\simeq H_*(A)\end{equation}
On the other hand $A$ is the \emph{disjoint union} of the open sets:
$$G_{j,n}=\{\det(Q)\neq 0, \, \ii^+(Q)=j\},\quad j=0,\ldots,n$$
We prove that each of these sets is homotopy equivalent to a Grasmmannian; this, together with equation (\ref{sigma}), will give the desired result. More specifically we show that the semialgebraic map
$$p_k:G_{j,n}\to \textrm{Gr}(j,n)$$
which sends each matrix $Q$ to its positive eigenspace, is a homotopy equivalence. In fact let $\{e_1,\ldots, e_n\}$ be the standard basis of $\R^n$ and $E_j$ be the span of the first $j$ basis elements. The preimage of $E_j$ under $p_j$ equals the set of all symmetric block matrices of the form $$Q=\left( \begin{matrix} D^2& 0\\0&Q'\end{matrix}\right)$$
with $D$ diagonal invertible and $Q'$ invertible and negative definite, i.e. $Q'\in G_{0, n-j}$. In particular, since the set $G_{0, n-j}$ is an open cone, it is contractible and:
$$p_j^{-1}(E_j)\simeq (\R^+)^j\times G_{0, n-j}\quad\textrm{is contractible}$$
For $W\in \textrm{Gr}(j,n)$ let $M$ be any orthogonal matrix such that $MW=E_j$; then clearly $p_j^{-1}(W)=M^{-1}p_j^{-1}(E_j) M$ and all the fibers of $p_j$ are homeomorphic. Thus $p_j$ is a semialgebraic map with contractible fibers, hence a homotopy equivalence. The last part of the theorem follows from the well known fact that $b(\textrm{Gr}(j,n))=\binom{n}{j}$ and the formula $\sum_{j=0}^n \binom{n}{j}=2^n$
\end{proof}\end{teo}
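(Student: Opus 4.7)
The plan is to combine Alexander-Pontryagin duality with a signature decomposition of the nonsingular locus, then identify each stratum with a real Grassmannian via its positive eigenspace.

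\emph{Step 1 (Duality).} Consider the open set $A=\{Q\in\sym_n(\R):\det Q\neq 0\}$. Since $\Sigma$ is a compact subset of the unit sphere $S\subset\sym_n(\R)$, the radial retraction $Q\mapsto Q/\|Q\|$ deformation retracts $A$ onto $S\setminus\Sigma$. Alexander-Pontryagin duality on the sphere (with $\Z_2$ coefficients) gives the relation $H^*(\Sigma)\simeq H_*(A)$ claimed in the statement (up to a degree shift that does not affect the total Betti number).

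\emph{Step 2 (Decomposition by inertia).} The positive inertia index $\ii^+$ is locally constant on $A$, so $A=\bigsqcup_{j=0}^{n} G_{j,n}$ with $G_{j,n}=\{Q\in A:\ii^+(Q)=j\}$ is a decomposition into open (hence closed) pieces, whence $H_*(A)=\bigoplus_{j=0}^{n}H_*(G_{j,n})$.

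\emph{Step 3 (Each stratum is a Grassmannian).} Define $p_j:G_{j,n}\to\textrm{Gr}(j,n)$ by sending $Q$ to its positive eigenspace. I would first observe that conjugation by $O(n)$ preserves $G_{j,n}$ and intertwines $p_j$ with the standard transitive $O(n)$-action on $\textrm{Gr}(j,n)$; combined with a local section of $O(n)\to\textrm{Gr}(j,n)$, this exhibits $p_j$ as a locally trivial fiber bundle. To compute the fiber it suffices to examine the preimage of the coordinate subspace $E_j=\textrm{span}(e_1,\ldots,e_j)$: a symmetric matrix has $E_j$ as its positive eigenspace iff it is block diagonal with a positive definite $j\times j$ block and a negative definite $(n-j)\times(n-j)$ block. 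This set is an open convex cone in a linear subspace, hence contractible. A fiber bundle with contractible fiber over a smooth manifold is a homotopy equivalence, so $H_*(G_{j,n})\simeq H_*(\textrm{Gr}(j,n))$.

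\emph{Step 4 (Counting).} Combining everything yields $H^*(\Sigma)\simeq\bigoplus_{j=0}^{n}H_*(\textrm{Gr}(j,n))$. Using the standard fact $b(\textrm{Gr}(j,n))=\binom{n}{j}$ together with $\sum_{j=0}^{n}\binom{n}{j}=2^n$ produces $b(\Sigma)=2^n$.

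The main obstacle is Step 3: one must ensure that $p_j$ is not just a set-theoretic surjection with contractible point-preimages, since that alone would not give a weak equivalence. The $O(n)$-equivariance bypasses this difficulty cleanly, reducing the claim to the contractibility of a single fiber; once that observation is in hand, the rest of the argument is routine.
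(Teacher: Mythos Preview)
Your proof is correct and follows the same architecture as the paper: Alexander--Pontryagin duality, decomposition of the nonsingular locus by positive inertia index, identification of each piece with a Grassmannian via the positive-eigenspace map $p_j$, and the binomial count.

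The only substantive difference is in Step~3. The paper computes one fiber of $p_j$, observes that conjugation by $O(n)$ makes all fibers homeomorphic, and then invokes the principle that a \emph{semialgebraic} map with contractible fibers is a homotopy equivalence (implicitly appealing to Hardt-type triviality). You instead use the $O(n)$-equivariance together with a local section of $O(n)\to\textrm{Gr}(j,n)$ to exhibit $p_j$ as a genuine locally trivial bundle with contractible fiber. Your route is more self-contained and avoids the semialgebraic machinery; the paper's route is shorter once one accepts that black box. Incidentally, your description of the fiber over $E_j$ (arbitrary positive definite block times arbitrary negative definite block) is the correct one; the paper's formulation with a diagonal $D^2$ is slightly too restrictive, though harmless for the contractibility conclusion.
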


Let $Z$ be the algebraic set of all singular matrices in $\sym_n(\R)$; we will be interested in greater generality in the filtration:
\begin{equation}\label{strataZ}\{0\}=Z^{(n)}\subset Z^{(n-1)}\subset \cdots \subset Z^{(2)}\subset Z^{(1)}=Z\end{equation}
We recall that each $Z^{(r)}$ is a real algebraic subset of $\sym_n(\R)$ of codimension 
$\binom{r+1}{2}$ and that the singular loci of these varieties are related by:
\begin{equation}\label{sing}\textrm{Sing}( Z^{(j)})=Z^{(j+1)}\end{equation}
References for this statements are \cite{Agrachev1} and \cite{AgLe}; in particular Proposition 9 of \cite{AgLe} shows that $Z$ is Nash stratified by the smooth semialgebraic sets $N_r=Z^{(r)}\backslash Z^{(r+1)}$ (see \cite{BCR} for the definition and properties of Nash stratifications).
Notice also that using the above notation we have the equalities $Y^{(r)}=Y\cap Z^{(r)}$ and $\Sigma=\{\|Q\|^2=1\}\cap Z^{(1)}$.\\
The degrees of the complexifications $Z_\C^{(r)}$ of these varieties are computed in \cite{HarrisTu}:
$$\deg Z_\C^{(r)}=\prod_{\alpha=0}^{r-1}\frac{\binom{n+\alpha}{r-\alpha}}{\binom{2\alpha+1}{\alpha}}=O(n)^{\frac{r(r+1)}{2}}  $$
Notice that they (and their hyperplane sections) have big degree but small topological complexity; this is due to the fact that can be defined by (many) equations of low degree.\\
Let us denote by $\ii^-(Q)$ the number of \emph{negative} eigenvalues of a symmetric matrix $Q$ and recall that
$$P_j=\{Q\in \sym_n(\R)\,|\, \ii^-(Q)\leq j\}, \quad0\leq j\leq n-1$$
is a (noncompact) topological submanifold of $\sym_n(\R)\simeq \Q_n$ with boundary (see \cite{Agrachev1}); let us set $$A_j=\partial P_j,\quad 0\leq j\leq n-1.$$ 
Here we show the picture for $ \sym_{2}(\R)\simeq \R^3.$ The set $P_0$ is the closed cone of positive semidefinite matrices, its boundary is a topological manifold; $P_1$ is the set of sign-indefinite matrices, it contains $P_0$ ant its boundary is again a topological manifold. The union $\partial P_0\cup \partial P_1$ is the set of singular matrices and this set is not a manifold: its singular locus (the zero matrix) is given by $\partial P_0\cap \partial P_1.$
\vspace{0.3cm}
\begin{center}
\fontsize{15}{0}

\scalebox{0.5} 
{
\begin{pspicture}(0,-5.89)(16.201895,5.87)
\pspolygon[linewidth=0.04,fillstyle=solid](0.08,4.63)(6.187329,4.6248903)(3.1978602,0.2806814)
\psellipse[linewidth=0.04,dimen=outer,fillstyle=solid](3.12,4.84)(3.12,1.03)
\pspolygon[linewidth=0.04,fillstyle=solid](15.28,-4.0993185)(9.16,-4.19)(12.16214,0.25)
\usefont{T1}{ptm}{m}{n}
\rput(4.261455,5.115){$P_0$}
\usefont{T1}{ptm}{m}{n}
\rput(3.1414552,2.315){$\partial P_0$}
\usefont{T1}{ptm}{m}{n}
\rput(14.871455,0.515){$P_1\supset P_0$}
\usefont{T1}{ptm}{m}{n}
\rput(12.161455,-2.045){$\partial P_1$}
\pspolygon[linewidth=0.04,linestyle=dashed,dash=0.16cm 0.16cm,fillstyle=solid](6.28,-4.0793185)(0.17267115,-4.074209)(3.16214,0.27)
\rput{-180.0}(6.44,-8.578637){\psellipse[linewidth=0.04,linestyle=dashed,dash=0.16cm 0.16cm,dimen=outer,fillstyle=solid](3.22,-4.2893186)(3.1,1.03)}
\pspolygon[linewidth=0.04,linestyle=dashed,dash=0.16cm 0.16cm,fillstyle=solid](9.08,4.61)(15.187329,4.6048903)(12.19786,0.2606814)
\psellipse[linewidth=0.04,linestyle=dashed,dash=0.16cm 0.16cm,dimen=outer,fillstyle=solid](12.12,4.82)(3.12,1.03)
\rput{-180.0}(24.48,-8.658637){\psellipse[linewidth=0.04,linestyle=dashed,dash=0.16cm 0.16cm,dimen=outer,fillstyle=solid](12.24,-4.3293185)(3.1,1.03)}
\psbezier[linewidth=0.04,fillstyle=solid](9.14,-4.19)(9.0,-5.87)(15.32,-5.790941)(15.32,-4.319339)
\end{pspicture} 
}
\end{center}
\vspace{0.3cm}
The following proposition describes in more detail the structure of the sets $Z^{(r)}$, using the combinatorics of the $A_j$'s.
\begin{propo}\label{zetar}For every $r\geq 0$ let $I_r$ be the set of all the subsets $\alpha$ of $\{0, \ldots, n-1\}$ consisting of $r$ consecutive integers. Then
$$Z^{(r)}=\bigcup_{\alpha \in I_r} \bigcap_{j\in \alpha} A_j.$$

\end{propo}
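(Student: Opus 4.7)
The plan is to identify $A_j=\partial P_j$ explicitly in terms of the inertia indices, and then read off which $Q$ lie in $\bigcap_{j\in\alpha}A_j$ when $\alpha$ consists of $r$ consecutive integers.

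First I would characterize the boundary $A_j$. Because the eigenvalues of a symmetric matrix vary continuously under perturbation, $\ii^-$ is locally constant on the open set of nonsingular matrices. Hence the interior of $P_j=\{\ii^-\le j\}$ is the set of $Q$ that stay inside $P_j$ under arbitrary small perturbations. If $Q$ is nonsingular, its inertia is stable, so $Q\in \mathrm{Int}(P_j)$ whenever $\ii^-(Q)\le j$. If $Q$ is singular, a small perturbation can split its zero eigenvalues into positive and negative ones; precisely, the values of $\ii^-$ attained in any neighborhood of $Q$ fill the interval $[\ii^-(Q),\ii^-(Q)+\dim\ker(Q)]$. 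Therefore
\[
Q\in A_j\quad\Longleftrightarrow\quad \ii^-(Q)\le j\le \ii^-(Q)+\dim\ker(Q)-1,
\]
i.e.\ $j$ belongs to the integer interval $J(Q):=[\ii^-(Q),\ii^-(Q)+\dim\ker(Q)-1]$.

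Next I would translate this into a statement about finite intersections. If $\alpha=\{j_0,j_0+1,\dots,j_0+r-1\}\in I_r$, then by the above
\[
Q\in\bigcap_{j\in\alpha}A_j\quad\Longleftrightarrow\quad \alpha\subseteq J(Q).
\]
Since $J(Q)$ is itself an interval of length $\dim\ker(Q)$, an $\alpha\in I_r$ can be contained in $J(Q)$ if and only if $\dim\ker(Q)\ge r$; and in that case one may take $\alpha=\{\ii^-(Q),\dots,\ii^-(Q)+r-1\}$, which indeed lies in $\{0,\dots,n-1\}$ because $\ii^-(Q)+\dim\ker(Q)\le n$.

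Putting the two characterizations together, $Q\in\bigcup_{\alpha\in I_r}\bigcap_{j\in\alpha}A_j$ iff $J(Q)$ contains some $r$ consecutive integers from $\{0,\dots,n-1\}$, which happens iff $\dim\ker(Q)\ge r$, i.e.\ iff $Q\in Z^{(r)}$. This gives the desired equality. The only delicate point is step one, the description of $A_j$; it reduces to the elementary fact that for a symmetric matrix the admissible signatures reachable by small perturbations are exactly those obtained by splitting the zero eigenvalues between positive and negative, which is standard.
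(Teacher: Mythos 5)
Your proof is correct and rests on the same underlying fact as the paper's argument, namely that small perturbations of a symmetric $Q$ realize exactly the negative inertia indices in $\{\ii^-(Q),\dots,\ii^-(Q)+\dim\ker(Q)\}$, while $\ii^-(Q)\le j$ characterizes membership in $P_j$. The difference is one of packaging: the paper encodes the boundary condition through the sequence-based property $s(l)$ (``$Q$ is a limit of matrices with $\ii^-\ge l$'') and then proves both inclusions separately, constructing an explicit diagonal perturbation for one direction, whereas you extract the closed-form description $A_j=\{Q:\ii^-(Q)\le j\le \ii^-(Q)+\dim\ker(Q)-1\}$ and reduce the whole statement to the observation that an interval $J(Q)$ of length $\dim\ker(Q)$ contains $r$ consecutive integers iff $\dim\ker(Q)\ge r$. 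This makes the combinatorial content of the proposition immediate and dispenses with the explicit sequence construction; what you lose is only the slightly greater self-containedness of the paper's argument, which spells out the perturbation rather than citing the standard splitting of kernel eigenvalues. You also correctly verify the small but necessary point that $J(Q)\subseteq\{0,\dots,n-1\}$, so the chosen $\alpha$ is indeed in $I_r$.
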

\begin{proof}
For $l\geq 0$ let us say that a matrix $Q$ has the property $s(l)$ if there exists a sequence $\{Q_n\}_{n\geq0}$ converging to $Q$ such that $\ii^-(Q_n)\geq l.$ Using this notation we have that $A_j=\{\ii^-(Q)\leq j\textrm{ and $Q$ has the property $s(j+1)$}\}$.
From this it follows that for every $r\geq 0$
$$A_i\cap A_{i+r-1}=\{\ii^-(Q)\leq i\textrm{ and $Q$ has the property $s(i+r)$}\}$$
which also says that $A_i\cap A_{i+1}\cap \cdots \cap A_{i+r-1}=A_i\cap A_{i+r-1}$. Let now $Q\in Z^{(r)}$ and $M$ be an orthogonal matrix such that:
$$M^{-1}Q M=\textrm{diag}(-\lambda_1^2,\ldots, -\lambda_{\ii^-(Q)}^2, \mu_{\ii^-(Q)+1}, \ldots, \mu_{n-r}, 0, \dots, 0)$$
with the $\lambda_i$'s greater than zero. Let now $D_n$ be defined by changing each zero on the diagonal of the previous matrix to $-\frac{1}{n}.$
Then if we set $Q_n=M D_n M^{-1}$ we have that $Q$ satisfies property $s(\ii^-(Q)+r)$ and thus it belongs to $\bigcup_{\alpha \in I_r} \bigcap_{j\in \alpha} A_j.$ Viceversa let $Q$ be in $\bigcup_{\alpha \in I_r} \bigcap_{j\in \alpha} A_j$; then $Q$ satisfies $s(\ii^-(Q)+r)$  and there exist $\{Q_n\}_{n\geq 0}$ such that:
$$\dim \ker Q=n-\ii^+(Q)-\ii^-(Q)\geq n-\ii^+(Q_n)-\ii^-(Q_n)+r\geq r$$
(for the inequality $\ii^+(Q_n)\geq \ii^+(Q)$ we have used that $\{\ii^+(Q)\geq j\}$ is an open set), i.e. $Q$ is in $Z^{(r)}.$

\end{proof}

\section{Transversality arguments}
In this section we will discuss the following idea. Suppose we are given $X$ by the vanishing of some quadratic polynomials in $\RP^n$ and let $W$ be the span of these polynomials as in the previous sections. The homological complexity of $X$ (up to a $n+1$ term) can be bounded, using Agrachev's bound, with the sum of the complexities of the sets $\Omega^j$. To have an alternate description of these sets let us introduce the following notation. Let $q_1,\ldots, q_k\in \Q_{n+1}$ be the quadratic forms defining $X$ and $q: S^{k-1} \to \Q_{n+1}$ the map defined by:
$$\omega=(\omega_1, \ldots, \omega_k) \mapsto \omega q= \omega_1 q_1+\ldots+\omega_k q_k$$
The map $q$ is the restriction to the sphere of the linear map sending the standard basis of $\R^k$ to $\{q_1, \ldots, q_k\}.$ We redefine now:
$$\Omega^j=\{\omega \in S^{k-1}\, | \, \ii^+(\omega q)\leq j\},\quad j\geq 1$$ If $q_1,\ldots, q_k$ are linearly independent, then this definition agrees with previous one; if they are not linearly independent the map $q$ is no longer an embedding, though a look at the proof of Agrachev's bound shows that it still holds:
$$b(X)\leq n+1+\sum_{j\geq 0}b(\Omega^{j+1})$$
(it is sufficient to use the set $B'=\{(\omega, [x])\in S^{k-1}\times \RP^n\, |\, (\omega q)(x)\geq 0\}$ instead of $B$ and the proof works the same; actually it can also be proved that these new sets deformation retracts to the previously defined ones). The question we address is now the following: what happens if we perturb the map $q$?\\
The perturbations we will be interested in are those of the form:
$$q_\eps:\omega\mapsto \omega q-\eps p$$
where $p$ is a positive definite quadratic form; in other words we will be interested in small \emph{affine} translations $q-\eps p$ of the map $q$.
It turns out that if $p$ is a positive definite quadratic form and $\eps>0$ is small enough then each set $\Omega^j$ is homotopy equivalent to the set:
$$\Omega_{n-j}(\eps)=\{\omega \in S^{k-1}\, |\, \ii^{-}(\omega q-\eps p)\leq n-j\}$$
where $\ii^{-}$ denotes the negative inertia index, i.e. $\ii^{-}(\omega q-\eps p)=\ii^{+}(\eps p-\omega q).$ In particular the Betti numbers of $\Omega^{j+1}$ and of its perturbation $\Omega_{n-j}(\eps)$ are the same, as proved in the following lemma from \cite{Le3}.
\begin{lemma}
\label{union}For every positive definite form $p\in \Q_{n+1}$  and for every $\eps>0$ sufficiently small
$$b(\Omega^{j+1})=b(\Omega_{n-j}(\eps)).$$
\begin{proof}
Let us first prove that $\Omega^{j+1}=\bigcup_{\eps>0}\Omega_{n-j}(\eps).$\\
Let $\omega \in \bigcup_{\eps>0}\Omega_{n-j}(\eps);$ then there exists $\overline{\eps}$ such that $\omega\in \Omega_{n-j}(\eps)$ for every $\eps<\overline{\eps}.$ Since for $\eps$ small enough $$\ii^{-}(\omega q-\eps p)=\ii^{-}(\omega q)+\dim (\ker (\omega q))$$ then it follows that $$\ii^{+}(\omega q)=n+1-\ii^{-}(\omega q)-\dim (\ker \omega q)\geq j+1.$$ Viceversa if $\omega \in \Omega^{j+1}$ the previous inequality proves $\omega\in \Omega_{n-j}(\eps)$ for $\eps$ small enough, i.e. $\omega \in \bigcup_{\eps>0}\Omega_{n-j}(\eps).$\\
Notice now that if $\omega\in \Omega_{n-j}(\eps)$ then, eventually choosing a smaller $\eps$, we may assume $\eps$ properly separates the spectrum of $\omega$ and thus, by continuity of the map $ q$, there exists an open neighborhood of $\omega$, $U$, such that $\eps$ properly separates also the spectrum of $\eta q$ for every $\eta \in U$. Hence every $\eta \in U$ also belongs to $\Omega_{n-j}(\eps)$. From this consideration it easily follows that each compact set in $\Omega^{j+1}$ is contained in some $\Omega_{n-j}(\eps)$ and thus $$\varinjlim_{\eps}\{H_{*}(\Omega_{n-j}(\eps))\}=H_{*}(\Omega^{j+1}).$$ It remains to prove that the topology of $\Omega_{n-j}(\eps)$ is definitely stable in $\eps$ going to zero. Consider the semialgebraic compact set $S_{n-j}=\{(\omega, \eps)\in S^{k-1}\times [0, \infty)\, |\, \ii^{-}(\omega q-\eps p)\leq n-j\}$. By Hardt's triviality theorem (see \cite{BCR}) we have that the projection $(\omega, \eps)\mapsto \omega$ is a locally trivial fibration over $(0,\eps)$ for $\eps$ small enough; from this the conclusion follows.
\end{proof}
\end{lemma}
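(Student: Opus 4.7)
The plan is to prove the equality of Betti numbers in three logical steps: first establish the set-theoretic identity $\Omega^{j+1} = \bigcup_{\eps > 0} \Omega_{n-j}(\eps)$; second promote it to a statement about homology via a compact exhaustion argument; and third invoke semialgebraic triviality to conclude eventual constancy of the topology as $\eps \to 0^+$.

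First I would derive a pointwise index identity. Fix $\omega \in S^{k-1}$. Since $p$ is positive definite, subtracting $\eps p$ shifts each eigenvalue of $\omega q$ continuously downward (Weyl's inequality). For $\eps$ smaller than the smallest modulus of a nonzero eigenvalue of $\omega q$, strictly positive eigenvalues remain positive, strictly negative eigenvalues remain negative, and zero eigenvalues become strictly negative. This yields
\[
\ii^-(\omega q - \eps p) = \ii^-(\omega q) + \dim \ker(\omega q),
\]
and via the relation $\ii^+ + \ii^- + \dim \ker = n+1$ the condition $\ii^-(\omega q - \eps p) \leq n-j$ becomes equivalent to $\ii^+(\omega q) \geq j+1$. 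Unioning over small $\eps$ gives the desired set equality.

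Next, operator monotonicity shows the family is \emph{nested}: $\Omega_{n-j}(\eps) \subseteq \Omega_{n-j}(\eps')$ whenever $\eps' \leq \eps$, so the union is increasing as $\eps \to 0^+$. To pass from this to homology, fix $\omega \in \Omega^{j+1}$ and choose $\eps$ inside the spectral gap of $\omega q$ around $0$; by continuity of the spectrum in $\omega$, the same $\eps$ lies in the spectral gap of $\eta q$ for all $\eta$ in a neighborhood $U$ of $\omega$, so $U \subseteq \Omega_{n-j}(\eps)$. A standard covering argument then shows every compact subset of $\Omega^{j+1}$ lies in some $\Omega_{n-j}(\eps)$, and by continuity of singular homology on increasing unions we obtain
\[
\varinjlim_{\eps \to 0^+} H_*(\Omega_{n-j}(\eps)) \cong H_*(\Omega^{j+1}).
\]

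Finally, to replace the direct limit by eventual constancy I would apply Hardt's semialgebraic triviality theorem (\cite{BCR}) to the compact semialgebraic family $S_{n-j} = \{(\omega, \eps) \in S^{k-1} \times [0, \infty) : \ii^-(\omega q - \eps p) \leq n-j\}$ projected onto the $\eps$-axis. This yields a trivialization over some interval $(0, \eps_0)$, making all fibers $\Omega_{n-j}(\eps)$ with $\eps \in (0, \eps_0)$ mutually homeomorphic and hence sharing a common Betti number, which by the preceding direct limit must equal $b(\Omega^{j+1})$. The main obstacle is precisely the transition from pointwise to uniform control in $\eps$: the threshold produced in the first step depends on $\omega$, so a priori no single $\eps$ works simultaneously for all of $S^{k-1}$; Hardt's theorem is exactly the tool that packages this local uniformity into a single $\eps_0$ valid globally on the sphere.
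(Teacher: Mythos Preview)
Your proof is correct and follows essentially the same three-step structure as the paper's own argument: the set-theoretic identity via the eigenvalue shift, the compact-exhaustion passage to a direct limit of homology, and Hardt's triviality for eventual stability in $\eps$. The only differences are cosmetic---you make the nesting $\Omega_{n-j}(\eps)\subseteq\Omega_{n-j}(\eps')$ for $\eps'\leq\eps$ explicit via Weyl monotonicity, whereas the paper leaves this implicit---so there is nothing substantive to add.
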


The following is a variation of Lemma 4 of \cite{Le3} and describes the structure of the sets of degenerate quadratic forms on the 'perturbed sphere'.
We recall that the space $Z$ of all degenerate forms in $\Q_{n+1}$ admits the semialgebraic Nash stratification $Z=\coprod N_r $ where $N_r=Z^{(r)}\backslash Z^{(r+1)}$ (as above we use the linear identification between quadratic forms and symmetric matrices). 
\begin{lemma}\label{perturb}There exists a positive definite form  $p\in \Q_{n+1}$ such that for every $\eps>0$ small enough the map $q_\eps:S^{k-1}\to \Q_{n+1}$ defined by:
$$\omega \mapsto \omega q-\eps p$$ 
is transversal to all strata of $Z=\coprod N_r.$ In particular $q_\eps^{-1}(Z)=\coprod q_\eps^{-1}(N_r)$ is a Nash stratification, the closure of $q_\eps^{-1}(N_r)$ equals $q_\eps^{-1}(Z^{(r)})$ and 
$$\emph{\textrm{Sing}}\big(q_\eps^{-1}\big(Z^{(r)}\big)\big)= q_\eps^{-1}\big(Z^{(r+1)}\big)$$
\end{lemma}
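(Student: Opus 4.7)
My plan is to apply parametric transversality to the three-parameter family
$$F:S^{k-1}\times P_+\times (0,\infty)\longrightarrow \Q_{n+1},\qquad (\omega,p,\eps)\longmapsto \omega q-\eps p,$$
where $P_+\subset \Q_{n+1}$ denotes the open cone of positive definite forms. First I would observe that $F$ is a submersion: its partial derivative in the $p$-direction sends $\dot p\in T_p P_+\simeq \Q_{n+1}$ to $-\eps\dot p$, a linear isomorphism of $\Q_{n+1}$ for every $\eps>0$. Hence $F$ is automatically transversal to each Nash stratum $N_r=Z^{(r)}\setminus Z^{(r+1)}$ of $Z$. The parametric transversality theorem then yields a residual (hence dense) subset $R\subset P_+$ such that for every $p\in R$ the restriction $F_p:S^{k-1}\times(0,\infty)\to\Q_{n+1}$, $(\omega,\eps)\mapsto \omega q-\eps p$, is still transversal to every $N_r$.

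Second, for a fixed $p\in R$, I would upgrade ``transversality for almost every $\eps$'' to ``transversality for every sufficiently small $\eps$''. Transversality of $F_p$ makes $F_p^{-1}(N_r)$ a smooth semialgebraic submanifold of $S^{k-1}\times(0,\infty)$ of codimension $\binom{r+1}{2}$, and by the standard slicing lemma, $q_\eps$ is transversal to $N_r$ exactly when $\eps$ is a regular value of the projection $\pi_r:F_p^{-1}(N_r)\to(0,\infty)$. Sard's theorem makes the set $B_r\subset(0,\infty)$ of critical values of $\pi_r$ of measure zero; crucially, $B_r$ is also semialgebraic (the critical locus is cut out by polynomial rank conditions and Tarski--Seidenberg preserves semialgebraicity under projection), and a measure-zero semialgebraic subset of $\R$ is finite. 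Since $N_r=\emptyset$ whenever $\binom{r+1}{2}>n+1$, only finitely many $r$ are relevant, so $\eps_0:=\min_r\inf B_r$ is positive and $q_\eps$ is transversal to every $N_r$ for all $\eps\in(0,\eps_0)$.

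The three ``in particular'' claims then follow from routine transversality bookkeeping. Transversal pullback of the Nash stratification $Z=\coprod N_r$ by the Nash map $q_\eps$ produces the Nash stratification $q_\eps^{-1}(Z)=\coprod q_\eps^{-1}(N_r)$, each stratum being smooth by transversality and Whitney regularity being preserved by transverse pullback. The closure relation $\overline{N_r}=Z^{(r)}$ in $\Q_{n+1}$ together with the local submersion form supplied by transversality at every point of $q_\eps^{-1}(N_s)$ with $s\geq r$ gives $\overline{q_\eps^{-1}(N_r)}=q_\eps^{-1}(Z^{(r)})$. Finally, combining transversality with equation (\ref{sing}) yields $\textrm{Sing}(q_\eps^{-1}(Z^{(r)}))=q_\eps^{-1}(\textrm{Sing}(Z^{(r)}))=q_\eps^{-1}(Z^{(r+1)})$. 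The main technical obstacle I anticipate is exactly the jump from ``almost every $\eps$'' to ``every sufficiently small $\eps$'': it is resolved only by working in the semialgebraic category, where a measure-zero subset of $\R$ is forced to be finite and therefore must avoid a whole neighborhood of $0$.
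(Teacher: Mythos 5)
Your proof is correct and rests on the same two pillars as the paper's: a parametric transversality argument over the open cone $\Q^{+}$ of positive definite forms, and the fact that measure-zero semialgebraic subsets of $\R$ are finite, which is what lets one pass from ``almost every $\eps$'' to ``every sufficiently small $\eps$.'' The packaging, however, is slightly different. The paper takes the two-parameter family $F(\omega,p)=\omega q - p$ on $S^{k-1}\times\Q^{+}$, applies semialgebraic Sard to the projections $\pi_i:F^{-1}(N_i)\to\Q^{+}$ to conclude that the set $\Sigma\subset\Q^{+}$ of bad values has positive codimension, and then concludes by choosing a direction $p$ so that the ray $\{tp\}_{t>0}$ meets $\Sigma$ in only finitely many points; in other words, $\eps$ enters only at the very end, through a generic-ray argument. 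You instead build $\eps$ into a three-parameter family $F(\omega,p,\eps)=\omega q-\eps p$, first apply parametric transversality in the $p$-direction (using that $\partial_p F=-\eps\,\mathrm{id}$ is an isomorphism for $\eps>0$) to find a fixed good $p$, and then apply a second Sard-plus-semialgebraicity argument to the slicing projection $\pi_r:F_p^{-1}(N_r)\to(0,\infty)$ to conclude that the bad $\eps$'s form a finite set. Your version makes the final reduction to small $\eps$ somewhat more mechanical (a single-variable semialgebraic Sard argument rather than the paper's generic-ray claim, which is slightly more delicate to justify in full), at the modest cost of carrying an extra parameter and performing Sard twice. One small point to tighten: your definition $\eps_0=\min_r\inf B_r$ should be taken over the finitely many indices $r$ with $N_r\neq\emptyset$ and over nonempty $B_r$, so that $\eps_0>0$ is guaranteed; as you note, each such $B_r$ is a finite set of positive reals, so its infimum is a positive minimum.
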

\begin{proof}Let $\Q^{+}$ be set of positive definite quadratic forms in $\Q_{n+1}$ and consider the map $F:S^{k-1}\times \Q^{+}$ defined by 
$$(\omega, p)\mapsto \omega q-p.$$
Since $\Q^{+}$ is open in $\Q,$ then $F$ is a submersion and $F^{-1}(Z)$ is Nash-stratified by $\coprod F^{-1}(N_{i}).$ Then for $p\in \Q^{+}$ the evaluation map $\omega \mapsto f(\omega)-p$ is transversal to all strata of $Z$ if and only if $p$ is a regular value for the restriction of the second factor projection $\pi:S^{k-1}\times \Q^{+}\to \Q^{+}$ to each stratum of $F^{-1}(Z)=\coprod F^{-1}(N_{i}).$
Thus let $\pi_{i}=\pi|_{F^{-1}(N_{i})}:F^{-1}(N_{i})\to \Q^{+};$ since all datas are smooth semialgebraic, then by semialgebraic Sard's Lemma (see \cite{BCR}), the set $\Sigma_{i}=\{\hat{q}\in \Q^{+}\, | \, \hat{q}\textrm{ is a critical value of $\pi_{i}$}\}$ is a semialgebraic subset of $\Q^{+}$ of dimension strictly less than $\dim (\Q^{+}).$ Hence $\Sigma=\cup_{i}\Sigma_{i}$ also is a semialgebraic subset of $\Q^{+}$ of dimension $\dim (\Sigma)<\dim (\Q^{+})$ and for every $p\in \Q^{+}\backslash \Sigma$ the map $\omega\mapsto f(\omega)-p$ is transversal to each $N_{i}.$ Since $\Sigma$ is semialgebraic of codimension at least one, then there exists $p\in \Q^{+}\backslash \Sigma$ such that $\{t p\}_{t>0}$ intersects $\Sigma$ in a finite number of points, i.e. for every $\eps>0$ sufficiently small $\eps p\in \Q^{+}\backslash \Sigma$. This concludes the proof.\end{proof}
\vspace{0.3cm}
\begin{center}
\fontsize{12}{0}
\scalebox{0.7} 
{
\begin{pspicture}(0,-4.07)(15.72291,4.07)
\pscircle[linewidth=0.04,dimen=outer,fillstyle=solid](3.2610157,0.03){2.22}
\pscircle[linewidth=0.04,dimen=outer,fillstyle=solid](11.641016,0.01){2.22}
\psline[linewidth=0.04cm](3.2610157,3.93)(3.2810156,-4.05)
\psbezier[linewidth=0.04](11.021015,4.05)(11.038329,-0.83)(12.163702,-0.7251527)(12.181016,4.05)
\psdots[dotsize=0.12](3.2610157,2.23)
\psdots[dotsize=0.12](3.2810156,-2.17)
\psdots[dotsize=0.12](11.101016,2.13)
\psdots[dotsize=0.12](12.101016,2.15)
\usefont{T1}{ptm}{m}{n}
\rput(0.5,-3.0){$W$}
\usefont{T1}{ptm}{m}{n}
\rput(4.6,3.2){$\det(\omega q)=0$}
\usefont{T1}{ptm}{m}{n}
\rput(15,-3.0){$W-\eps p$}
\usefont{T1}{ptm}{m}{n}
\rput(14,3.2){$\det( \omega q-\eps p)=0$}
\end{pspicture} 
}
\end{center}
\vspace{0.3cm}
Since the codimension of $Z^{(r)}$ is $\binom{r+1}{2}$ we can immediately derive the following.
\begin{coro}\label{codim}Assume $r>\frac{1}{2}(-1+\sqrt{8k-7})$; then there exists a positive definite form $p$ such that for $\eps>0$ small enough:
$$\{\omega \in S^{k-1}\,|\,\dim \ker (\omega q-\eps p)\geq r\}=\emptyset$$
\end{coro}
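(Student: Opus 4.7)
The plan is to deduce the corollary directly from Lemma \ref{perturb} together with a codimension count, since the hypothesis on $r$ is precisely the threshold at which the expected codimension of the preimage exceeds the dimension of $S^{k-1}$.

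First I would rewrite the hypothesis in the more usable form $\binom{r+1}{2}\geq k$, i.e.\ $\binom{r+1}{2}>k-1=\dim S^{k-1}$; this is elementary since $\tfrac{r(r+1)}{2}>k-1$ is equivalent to $r^{2}+r-2(k-1)>0$, whose positive root is exactly $\tfrac{1}{2}(-1+\sqrt{8k-7})$.

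Next I would apply Lemma \ref{perturb} to produce a positive definite $p\in\Q_{n+1}$ such that, for all sufficiently small $\eps>0$, the map $q_\eps:S^{k-1}\to\Q_{n+1}$ is transversal to every Nash stratum $N_{j}=Z^{(j)}\setminus Z^{(j+1)}$ of $Z$. The set in question can be written as
\[
\bigl\{\omega\in S^{k-1}\,\bigl|\,\dim\ker(\omega q-\eps p)\geq r\bigr\}
= q_\eps^{-1}\bigl(Z^{(r)}\bigr)
= \bigcup_{j\geq r} q_\eps^{-1}(N_{j}).
\]

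Then I would invoke the fact recalled in the paper that $N_{j}$ is a smooth semialgebraic submanifold of $\Q_{n+1}$ of codimension $\binom{j+1}{2}$. By transversality, whenever $q_\eps^{-1}(N_{j})$ is nonempty it is a smooth submanifold of $S^{k-1}$ of the same codimension $\binom{j+1}{2}$. For $j\geq r$ this codimension satisfies $\binom{j+1}{2}\geq\binom{r+1}{2}>k-1=\dim S^{k-1}$, so each $q_\eps^{-1}(N_{j})$ must be empty, and taking the union over $j\geq r$ gives the claim.

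The only potential obstacle is checking that transversality really does force emptiness here, but this is standard: if $f:M\to N$ is transversal to a submanifold $S\subset N$ of codimension $c>\dim M$, then $f^{-1}(S)$ is either empty or a submanifold of $M$ of codimension $c$, which is impossible. So the proof is essentially a one-line codimension estimate once Lemma \ref{perturb} is in hand.
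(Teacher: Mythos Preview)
Your proof is correct and is exactly the argument the paper has in mind: the paper states the corollary immediately after observing that the codimension of $Z^{(r)}$ is $\binom{r+1}{2}$, leaving the transversality--plus--codimension reasoning you spell out entirely implicit. Your version simply fills in the details (rewriting the threshold as $\binom{r+1}{2}>k-1$ and passing through the strata $N_j$), so there is nothing to correct or compare.
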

We recall the following  result describing the local topology of the space of quadratic forms (see \cite{AgLe}, Proposition 9).
\begin{propo}\label{topquad}Let $q_{0}\in \mathcal{Q}$ be a quadratic form and let $V$ be its kernel. Then there exists a neighborhood $U_{q_{0}}$ of $q_{0}$ and a smooth semialgebraic map $\phi:U_{q_{0}}\to \Q(V)$ such that: 1) $\phi(q_{0})=0$; 2) $\ii^{-}(q)=\ii^{-}(q_{0})+\ii^{-}(\phi(q));$ 3) $\dim\ker (q)=\dim\ker(\phi(q));$ 4) for every $p\in \mathcal{Q}$ we have $ d\phi_{q_{0}}(p)=p|_{V}.$
\end{propo}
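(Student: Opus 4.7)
The plan is to use a parametric Schur complement (partial diagonalization) along the complement of $V=\ker q_{0}$. Fix a scalar product on $\R^{n+1}$ and decompose $\R^{n+1}=V\oplus V^{\perp}$. Under the identification $\Q_{n+1}\simeq\sym_{n+1}(\R)$, every quadratic form $q$ corresponds to a symmetric block matrix
\[
Q=\begin{pmatrix} A(q) & B(q) \\ B(q)^{T} & C(q) \end{pmatrix},
\]
with $A(q)=q|_{V}$, $C(q)=q|_{V^{\perp}}$, and $B(q)$ the mixed block. Since $q_{0}$ vanishes on $V$, we have $A(q_{0})=0$, $B(q_{0})=0$, and $C(q_{0})$ is nondegenerate with $\ii^{-}(C(q_{0}))=\ii^{-}(q_{0})$. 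By continuity, there is a connected neighborhood $U_{q_{0}}$ of $q_{0}$ on which $C(q)$ remains invertible and therefore has constant signature.

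Next I would define
\[
\phi(q)=A(q)-B(q)\,C(q)^{-1}B(q)^{T}\in\Q(V),
\]
which is smooth and semialgebraic because matrix inversion is rational. Property (1) is immediate from $A(q_{0})=B(q_{0})=0$. For (2) and (3), a direct computation gives the congruence
\[
Q=P(q)^{T}\begin{pmatrix} \phi(q) & 0 \\ 0 & C(q) \end{pmatrix}P(q),\qquad P(q)=\begin{pmatrix} I & 0 \\ C(q)^{-1}B(q)^{T} & I \end{pmatrix}.
\]
By Sylvester's law, $Q$ has the same inertia and nullity as the block-diagonal form $\phi(q)\oplus C(q)$; combined with the local constancy of $\ii^{-}(C(q))$ and the invertibility of $C(q)$, this yields $\ii^{-}(q)=\ii^{-}(q_{0})+\ii^{-}(\phi(q))$ and $\dim\ker q=\dim\ker\phi(q)$.

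For property (4), differentiating $\phi$ at $q_{0}$ in a direction $p\in\Q$ is straightforward: because $B(q_{0})=0$, every term in the product rule applied to $B(q)C(q)^{-1}B(q)^{T}$ vanishes at $q_{0}$, so $d\phi_{q_{0}}(p)=dA_{q_{0}}(p)=A(p)=p|_{V}$, where the last equality uses that $A$ is the linear projection onto the $V\times V$ block. There is no serious obstacle in this argument: the entire content is the Schur complement together with Sylvester's law. The only care needed is to shrink $U_{q_{0}}$ so that $C(q)$ remains invertible and its inertia constant, which is automatic by continuity of eigenvalues.
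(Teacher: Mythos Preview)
Your proof is correct. The paper does not actually prove this proposition: it simply quotes it from \cite{AgLe}, Proposition~9, as indicated in the text preceding the statement. Your Schur complement argument is the standard and natural proof of this fact, and the congruence $Q\cong\phi(q)\oplus C(q)$ together with Sylvester's law of inertia and the local constancy of the signature of $C(q)$ give exactly properties (1)--(4). There is nothing to compare here, since the paper provides no proof of its own; your argument is self-contained and almost certainly coincides with what appears in the cited reference.
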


Combining Lemma \ref{perturb} and the previous proposition we can prove the following corollary, which shows that after the perturbation the sets $\Omega_{n-j}(\eps)$ have a very nice structure.
\begin{coro}\label{boundary}Let $p$ be the positive definite form provided by Lemma \ref{perturb}. Then for every $\eps>0$ small enough:
$$\Omega_{n-j}(\eps)\textrm{ is a \emph{topological} submanifold of $S^{k-1}$ with boundary}.$$
\end{coro}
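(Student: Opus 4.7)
The plan is to prove the statement locally: for each $\omega_0\in \Omega_{n-j}(\eps)$ I will build a chart in which a neighborhood of $\omega_0$ in $\Omega_{n-j}(\eps)$ becomes the product of the already-known topological manifold-with-boundary $P_a\subset \sym_r(\R)$ with a Euclidean factor. Write $q_0 = \omega_0 q - \eps p$, $V=\ker q_0$, $r = \dim V$. If $r=0$ then $\ii^-$ is locally constant near $\omega_0$, so a whole neighborhood of $\omega_0$ sits in $\Omega_{n-j}(\eps)$ and $\omega_0$ is an interior point; hence one may assume $r\geq 1$.

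Apply Proposition \ref{topquad} to produce $\phi:U_{q_0}\to \Q(V)\simeq \sym_r(\R)$ with $\phi(q_0)=0$. By Property (4) the differential $d\phi_{q_0}$ is the restriction-to-$V$ map and is therefore surjective, so $\dim \ker d\phi_{q_0}=\binom{n+2}{2}-\binom{r+1}{2}=\dim T_{q_0}N_r$. Property (3) forces $\phi$ to vanish identically on $N_r\cap U_{q_0}$ (because $\phi(q)\in \sym(V)$ with $\dim V=r$ and $\dim \ker \phi(q)=r$ together imply $\phi(q)=0$), whence $T_{q_0}N_r\subseteq \ker d\phi_{q_0}$ and the dimension count yields equality. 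Consequently the transversality of $q_\eps$ to $N_r$ at $\omega_0$ provided by Lemma \ref{perturb} is precisely the surjectivity of $d(\phi\circ q_\eps)_{\omega_0}$; equivalently, $\psi:=\phi\circ q_\eps:S^{k-1}\to \sym_r(\R)$ is a submersion in a neighborhood of $\omega_0$.

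Set $a:=n-j-\ii^-(q_0)\geq 0$ and $P_a:=\{A\in \sym_r(\R):\ii^-(A)\leq a\}$. Property (2) of Proposition \ref{topquad} gives $\ii^-(\omega q-\eps p)=\ii^-(q_0)+\ii^-(\psi(\omega))$ for $\omega$ close to $\omega_0$, so near $\omega_0$ one has $\Omega_{n-j}(\eps)=\psi^{-1}(P_a)$. By the submersion theorem, a neighborhood of $\omega_0$ in $S^{k-1}$ is diffeomorphic to $\sym_r(\R)\times \R^{k-1-\binom{r+1}{2}}$ with $\psi$ corresponding to the projection onto the first factor; in this chart $\Omega_{n-j}(\eps)$ becomes $P_a\times \R^{k-1-\binom{r+1}{2}}$. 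Since the discussion preceding Proposition \ref{zetar} (citing \cite{Agrachev1}) records that each $P_a$ is a topological submanifold with boundary of $\sym_r(\R)$, the product is a topological manifold with boundary, and the local claim follows.

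The only genuine subtlety is the identification $\ker d\phi_{q_0}=T_{q_0}N_r$ needed to convert the transversality of Lemma \ref{perturb} into the submersion of $\psi$; once this is in hand, everything else reduces to an inertia-index bookkeeping via Property (2) and to the submersion theorem, while the delicate topological content — that the stratification of $\{\det =0\}$ in $\sym_r(\R)$ fits together into a manifold with boundary $P_a$ — has already been absorbed in the cited result of Agrachev.
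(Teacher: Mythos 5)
Your proof is correct and follows essentially the same route as the paper: apply Proposition \ref{topquad} to reduce to the local model $\phi\circ q_\eps$, use Lemma \ref{perturb}'s transversality plus the surjectivity of $d\phi$ (Property~4) to get a submersion, invoke the rank theorem, and identify the local picture via Property~2 with the known manifold-with-boundary $P_a$. The only differences are cosmetic: you treat $r=0$ as a separate case (the paper absorbs it silently, since then $\Q(\ker q_0)=\{0\}$), and you prove the equality $\ker d\phi_{q_0}=T_{q_0}N_r$ by a dimension count, whereas the paper only uses the inclusion $T_{q_0}N_r\subseteq\ker d\phi_{q_0}$, which already suffices to push the transversality identity through $d\phi_{q_0}$ and conclude surjectivity of $d(\phi\circ q_\eps)_{\omega_0}$.
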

\begin{proof}
Let $p$ be the quadratic form given by Lemma \ref{perturb} and $f=q_\eps:S^{k-1}\to \Q_{n+1}$ the map consequently defined. Let us consider a point $\omega$ in $\Omega_{n-j}(\eps)$ and the map $\phi:U_{f(\omega)}\to \Q(\ker f(\omega))$ given by Proposition \ref{topquad}. Since $d\phi_{f(\omega)}p=p|_{\ker f(\omega)}$ then $d\phi_{f(\omega)}$ is surjective. On the other hand by transversality of $f$ to each stratum $N_{r}$ we have:
$$ \textrm{im} (df_{\omega})+T_{f(\omega)}N_{r}=\mathcal{Q}_{n+1}$$
Since $\phi(N_{r})=\{0\}$ (notice that this condition implies $(d\phi_{f(\omega)})|_{T_{f(\omega)}N_{r}}=0$) then
$$\Q(\ker f(\omega))=\textrm{im}(d\phi_{f(\omega)})=\textrm{im}(d(\phi\circ f)_{\omega})$$
which tells that $\phi \circ f$ is a submersion at $\omega.$ Thus by the Rank Theorem there exist an open neighborhood $U_{\omega}$ of $\omega$ and an open diffeomorphism onto its image $\psi$ such that the following diagram is commutative:
$$\begin{tikzpicture}[xscale=1.5, yscale=1.5]

    \node (A2_0) at (2, 0) {$\Q(\ker f(\omega))$};
    \node (A1_1) at (1, 1) {$U_\omega$};
    \node (A3_1) at (3, 1) {$\Q(\ker f(\omega))\times \R^{l}$};
    
    \path (A1_1) edge [->] node [auto] {$\psi$} (A3_1);
    \path (A1_1) edge [->] node [auto,swap] {$\phi \circ f$} (A2_0);
    \path (A3_1) edge [->] node [auto] {$p_{1}$} (A2_0);

      \end{tikzpicture}
$$ 
(in particular $\psi(U_{\omega})$ is an open subset of $\Q(\ker f(\omega))\times \R^{l}$). Let us pick up an open neighborhood of $\psi(\omega)$ of the form $A\times B$, with $A\subset  \Q(\ker f(\omega))$ and $B\subset \R^{l}$ contractible, and consider the open set $U'=U_\omega\cap \psi^{-1}(A\times B)$ and the commutative diagram:
$$\begin{tikzpicture}[xscale=1.5, yscale=1.5]

    \node (A2_0) at (2, 0) {$A$};
    \node (A1_1) at (1, 1) {$U'$};
    \node (A3_1) at (3, 1) {$A\times B$};
    
    \path (A1_1) edge [->] node [auto] {$\psi$} (A3_1);
    \path (A1_1) edge [->] node [auto,swap] {$\phi \circ f$} (A2_0);
    \path (A3_1) edge [->] node [auto] {$p_{1}$} (A2_0);

      \end{tikzpicture}
$$ 

Notice now that for every $\eta$ in $U'$ the second point of Proposition \ref{topquad} implies that $\ii^{-}(f(\eta))=\ii^{-}(f(\omega))+\ii^{-}(\phi(f(\eta))).$ In particular we see that $U'\cap \Omega_{n-j}(\eps)$ is homeomorphic, through $\psi$, to the set$$(A\cap \{q\in \Q(\textrm{ker} f(\omega))\textrm{ such that }\ii^-(q)\leq n-j-\ii^-(f(\omega))\} )\times B$$
The left hand side factor is the intersection of $A$ with the set of quadratic forms in $\Q(\textrm{ker} f(\omega))$ with \emph{negative} inertia index less or equal than $n-j-\ii^- (f(\omega));$ since this set is a topological submanifold with boundary in $\Q(\textrm{ker} f(\omega))$, then $U'\cap \Omega_{n-j}$ is homeomorphic to an open neighborhood of a topological manifold with boundary. This proves that for every point $\omega\in \Omega_{n-j}(\eps)$ there is an open neighborhood $U'_{\omega}$ such that $U'_{\omega}\cap \Omega_{n-j}(\eps)$ is homeomorphic to an open set of a topological manifold with boundary; thus $\Omega_{n-j}(\eps)$ itself is a topological manifold with boundary (the boundary being possibly empty). 
\end{proof}
\section{A topological bound}
The scope of this section is to provide a formula which generalizes (\ref{toyeq}) from Example \ref{toy}. The idea is to use Lemma \ref{union} and Corollary \ref{boundary} in Agrachev's bound: the first says that we can perturb each set $\Omega^j$ to a set $\Omega_{n-j}(\eps)$ without changing its Betti numbers, the second says that we can do that \emph{and} make the new sets topological manifolds with boundary. As we will see we can use the topological manifold structure of these sets to get more information out of Agrachev's bound. We start by proving the following lemma from algebraic topology.
\begin{lemma}\label{boundary2} Let $M$ be a semialgebraic topological submanifold of the sphere $S^n$ with nonempty boundary and nonempty interior. Then:
$$b(M)=\frac{1}{2}b(\partial M)$$
\end{lemma}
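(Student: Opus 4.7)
The plan is to realise $M$ as one half of a decomposition of the sphere. Set $N:=\overline{S^n\setminus M}$; since $M$ is a compact codimension-zero semialgebraic topological submanifold with nonempty boundary and interior, $N$ is again such a submanifold and the two share their boundary: $M\cup N=S^n$ with $M\cap N=\partial M=\partial N$. A semialgebraic collar of $\partial M$ lets me thicken $M$ and $N$ to open sets homotopy equivalent to themselves whose intersection deformation retracts onto $\partial M$, so that Mayer--Vietoris applies in the usual way.

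First I would write out the Mayer--Vietoris sequence for this cover:
$$\cdots \to H_{k+1}(S^n)\to H_k(\partial M)\to H_k(M)\oplus H_k(N)\to H_k(S^n)\to\cdots.$$
Over $\Z_2$, $H_*(S^n)$ is concentrated in degrees $0$ and $n$, and $H_n(M)=H_n(N)=0$ since both are $n$-manifolds with nonempty boundary. The sequence therefore reduces to: isomorphisms $H_k(\partial M)\cong H_k(M)\oplus H_k(N)$ for $1\le k\le n-2$; a short exact sequence $0\to H_n(S^n)\to H_{n-1}(\partial M)\to H_{n-1}(M)\oplus H_{n-1}(N)\to 0$ in degree $n-1$; and a short exact sequence $0\to H_0(\partial M)\to H_0(M)\oplus H_0(N)\to H_0(S^n)\to 0$ in degree zero. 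Summing Betti numbers, the correction $+1$ from the top end cancels the correction $-1$ from the bottom end, giving the identity $b(\partial M)=b(M)+b(N)$.

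Next I would invoke Alexander duality in $S^n$: for the compact, locally contractible subset $M$,
$$\widetilde H_k(S^n\setminus M;\Z_2)\cong \widetilde H^{n-k-1}(M;\Z_2).$$
The collar shows that $S^n\setminus M$ deformation retracts onto $\mathrm{int}(N)\simeq N$. Summing over $k$ and using that homology and cohomology Betti numbers coincide over $\Z_2$, this gives $\widetilde b(N)=\widetilde b(M)$, hence $b(N)=b(M)$ because both sets are nonempty. Substituting into the Mayer--Vietoris identity yields the desired $b(\partial M)=2\,b(M)$.

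The one delicate step is verifying that the connecting map $H_n(S^n)\to H_{n-1}(\partial M)$ is injective, i.e.\ that the fundamental class $[S^n]$ is sent to the $\Z_2$-fundamental class of the closed $(n-1)$-manifold $\partial M$. This reflects the geometric fact that $\partial M$ separates $S^n$ into the two pieces $M$ and $N$, and it is precisely where the ambient-sphere hypothesis enters: for abstract manifolds with boundary the equality $b(\partial M)=2b(M)$ can fail (the M\"obius band is the classical counterexample). Once this point is settled, the rest is a clean rank count from Mayer--Vietoris and Alexander duality.
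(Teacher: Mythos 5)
Your proof is correct and follows essentially the same route as the paper: thicken $M$ and $N=\overline{S^n\setminus M}$ by semialgebraic collars, apply Mayer--Vietoris to get $b(\partial M)=b(M)+b(N)$, and then use Alexander duality in $S^n$ to conclude $b(M)=b(N)$. The paper performs the same bookkeeping in reduced homology so that the degree-$0$ and degree-$(n-1)$ corrections cancel in a single step, but the argument is identical in substance.
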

\begin{proof}
By assumption also $N=S^n\backslash \textrm{int}(M)$ is a semialgebraic topological manifold with boundary $\partial N=\partial M$.  Let us consider collar neighborhoods $A$ of $M$ and $B$ of $N$ such that $A\cap B$ deformation retracts to $\partial M$ (such collar neighborhoods certainly exist by semialgebraicity and the Collaring Theorem). From the reduced Mayer-Vietoris sequence for the pair $(A, B)$ we get: $\tilde b_i(A)+ \tilde b_i(B)=\tilde b_i(A\cap B)$ for $i\neq n-1$, and $\tilde b_{n-1}(A)+ \tilde b_{n-1}(B)= \tilde b_{n-1}(A\cap B)-1$. Summing up all these equations we obtain:
$$\tilde b(A)+ \tilde b(B)=\tilde b(A\cap B) -1$$
(here we are using the notation $\tilde b(Y)$ for the sum of the \emph{reduced} Betti numbers of a semialgebraic set $Y$).
Alexander-Pontryiagin duality implies that $\tilde b(N)=\tilde b(M)$; on the other hand $A$ and $B$ deformation retract respectively to $M$ and $N$, which means $\tilde b(A)=\tilde b(M)=\tilde b(N)=\tilde b(B)$. Plugging this equality in the previous formula immediately gives the statement.
\end{proof}
We prove now the main technical theorem of the paper. A toy model proof in the case $\Sigma_\eps$ is smooth was provided in Example \ref{toy}; another proof for the case $\Sigma_\eps$ has only isolated singularities is given in Example \ref{ex:four}; the reader uncomfortable with technical details is advised to take a look at them first.
\begin{teo}\label{main}Let $X$ be defined by the vanishing of the quadratic forms $q_1,\ldots, q_k$ in $\RP^n.$ For every quadratic form $p$ and real number $\eps$ let us define 
$$\Sigma_\eps=\{\omega \in S^{k-1}\,|\, \det (\omega q-\eps p)=0\}$$ 
where $q=(q_1,\ldots, q_k)$. There exists a positive definite form $p\in Q_{n+1}$ such that for every $\eps>0$ small enough:
\begin{itemize}
\item[i)] the map $q_\eps:S^{k-1}\to \Q_{n+1}$ given by $\omega \mapsto \omega q-\eps p$ is transversal to all strata of $Z$, stratified as  in equation (\ref{strataZ}); in particular 
$$\Sigma_\eps^{(r)}=\{\omega \in S^{k-1}\,|\, \dim \ker (\omega q-\eps p)\geq r\}$$
is an algebraic subset of $S^{k-1}$ of codimension $\binom{r+1}{2}$.
\item[ii)] if we let $\mu$ and $\nu$ be respectively the maximum and the minimum of the negative inertia index on the image of $q_\eps$, then
\begin{equation}\label{boundstrong}b(X)\leq n+1-2(\mu-\nu)+\frac{1}{2}\sum_{r\geq1} b\big(\Sigma_\eps^{(r)}\big)\end{equation}
\end{itemize}
The last sum is indeed finite since for $\binom{r+1}{2}\geq k$ part \emph{i)} implies $\Sigma_\eps^{(r)}=\emptyset$.

\end{teo}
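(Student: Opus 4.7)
The plan splits along the two items. For part (i), I would apply Lemma \ref{perturb} directly: the positive definite $p$ produced there ensures transversality of $q_\eps$ to every stratum of $Z$, so $\Sigma_\eps^{(r)}=q_\eps^{-1}(Z^{(r)})$ is of codimension $\binom{r+1}{2}$ in $S^{k-1}$. Whenever $\binom{r+1}{2}\geq k$ this codimension exceeds $k-1$, forcing $\Sigma_\eps^{(r)}=\emptyset$ and truncating the sum.

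For part (ii) I would assemble four ingredients. First, from the cohomology long exact sequence of the pair $(\RP^n,\RP^n\setminus X)$ combined with the Alexander--Pontryagin duality $H^{*}(\RP^n,\RP^n\setminus X)\simeq H_{n-*}(X)$, a direct dimension count yields the sharp identity
$$b(X)=b(\RP^n)+b(\RP^n\setminus X)-2\dim\,\mathrm{im}(\iota^*),$$
where $\iota^*:H^*(\RP^n)\to H^*(\RP^n\setminus X)$ is the restriction; this refines the naive bound employed inside Agrachev's theorem. Second, for $b(\RP^n\setminus X)$ I would combine the Leray spectral sequence of $p_{1}:B\to S^{k-1}$ (as in Agrachev's theorem) with Lemma \ref{union} to write $b(\RP^n\setminus X)\leq\sum_{l=0}^{n}b(\Omega_l(\eps))$, and then use Corollary \ref{boundary} to regard each $\Omega_l(\eps)$ as a topological submanifold with boundary. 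The summands split naturally into three groups: $\nu$ empty ones (contribution $0$), $n+1-\mu$ equal to $S^{k-1}$ (contribution $2$ each), and $\mu-\nu$ intermediate values for which Lemma \ref{boundary2} gives $b(\Omega_l(\eps))=\tfrac12\,b(\partial\Omega_l(\eps))$; hence $b(\RP^n\setminus X)\leq 2(n+1-\mu)+\tfrac12\sum_{l=\nu}^{\mu-1}b(\partial\Omega_l(\eps))$.

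Third, I would establish the lower bound $\dim\,\mathrm{im}(\iota^*)\geq n+1-\nu$: if $\omega_{0}\in S^{k-1}$ realises $\nu=\min \ii^-(\omega q-\eps p)$, then for $\eps$ small the positive subspace $V\subset\R^{n+1}$ of $\omega_{0}q$ has dimension $n+1-\nu$, and on $\RP(V)\simeq\RP^{n-\nu}$ one has $\omega_{0}q>0$, so $\RP(V)\subset\RP^n\setminus X$. Factoring the restriction $H^*(\RP^n)\to H^*(\RP(V))$ through $H^*(\RP^n\setminus X)$ and noting that $x^i\neq 0$ in $H^i(\RP^{n-\nu})$ for $0\leq i\leq n-\nu$ forces the desired lower bound on $\mathrm{im}(\iota^*)$. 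Inserting the three ingredients into the sharp identity gives
$$b(X)\leq (n+1)-2(\mu-\nu)+\tfrac12\sum_{l=\nu}^{\mu-1}b(\partial\Omega_l(\eps)).$$

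The fourth ingredient closes the estimate. By Proposition \ref{zetar} together with the transversality from part (i), the family $\{\partial\Omega_l(\eps)\}_{l=\nu}^{\mu-1}=\{q_\eps^{-1}(A_l)\}$ covers $\Sigma_\eps^{(1)}$ while its $r$-fold consecutive intersections are contained in $\Sigma_\eps^{(r)}$. Iterating the Mayer--Vietoris inequality $b(A)+b(B)\leq b(A\cup B)+b(A\cap B)$ along this stratified cover produces $\sum_{l=\nu}^{\mu-1}b(\partial\Omega_l(\eps))\leq \sum_{r\geq 1}b(\Sigma_\eps^{(r)})$, and substituting yields exactly \eqref{boundstrong}. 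The principal obstacle will be this final iterated Mayer--Vietoris: the cover is non-disjoint with overlap pattern dictated by Proposition \ref{zetar}, so the bookkeeping must be arranged carefully so that the sum of boundary Betti numbers telescopes into $\sum_{r}b(\Sigma_\eps^{(r)})$ without residual overcounting.
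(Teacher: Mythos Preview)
Your proposal is correct and reaches the same intermediate inequality (\ref{refined}) as the paper, but by a genuinely different route for the $-2(\mu-\nu)$ term. The paper invokes the \emph{refined} spectral sequence of \cite{AgLe}, Theorem A, which converges directly to $H_*(X)$ with $E_2\simeq\bigoplus_j H^*(B^k,\Omega^{j+1})$; counting $b(B^k,\Omega^{j+1})$ as $1$, $1$, or $b(\Omega^{j+1})-1$ according to whether $\Omega^{j+1}$ is empty, all of $S^{k-1}$, or intermediate, yields (\ref{refined}) immediately. You instead stay with the original spectral sequence for $\RP^n\setminus X$, use the exact identity $b(X)=b(\RP^n)+b(\RP^n\setminus X)-2\dim\mathrm{im}(\iota^*)$ coming from the long exact sequence of the pair, and supply the extra input $\dim\mathrm{im}(\iota^*)\geq n+1-\nu$ by a direct geometric argument. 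The advantage of your route is that it is self-contained within the material already developed in the paper (no need to import the second spectral sequence); the paper's route is shorter here because it outsources the work to \cite{AgLe}.

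One small point in your third ingredient deserves tightening: you assert that the positive subspace of $\omega_0 q$ has dimension \emph{exactly} $n+1-\nu$, but what you actually need (and what follows cleanly) is $\ii^+(\omega_0 q)\geq n+1-\nu$. This is immediate once you observe that on the $p$-orthogonal sum of the positive eigenspace and the kernel of $\omega_0 q-\eps p$ (a subspace of dimension $n+1-\nu$) the form $\omega_0 q=(\omega_0 q-\eps p)+\eps p$ is positive definite. The final Mayer--Vietoris telescoping is exactly what the paper carries out in detail via the auxiliary sets $E_{i,r}$ and $F_{i+1,r}$; you have correctly identified it as the principal technical step, and the paper's bookkeeping confirms that the inequality $\sum_l b(\partial\Omega_l(\eps))\leq\sum_{r\geq 1}b(\Sigma_\eps^{(r)})$ goes through without residual terms.
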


\begin{proof}
The first part of the statement follows directly from Lemma \ref{perturb}. For the second part, in order to get the $-2(\mu-\nu)$ term in equation (\ref{boundstrong}), we will use a refined version of Agrachev's bound. The refined bound follows by considering in the proof of Agrachev's one a spectral sequence converging directly to $H_*(X)$ and whose second term is isomoprhic to $E_2\simeq\oplus_j H^*(B^{k}, \Omega^{j+1})$, where $B^{k}$ is the unit ball in $\R^{k}$ such that $\partial B^{k}=S^{k-1}.$  The existence of such a spectral sequence is the content of Theorem A of \cite {AgLe}; repeating verbatim the above argument we get $b(X)\leq \sum_{j\geq 0} b(B, \Omega^{j+1})$. Let now $p$ be given by Lemma \ref{perturb}; since $p$ is positive definite then Lemma \ref{union} implies $b(\Omega^{j+1})=b(\Omega_{n-j}(\eps))$ for $\eps>0$ sufficiently small and for every $j\geq 0.$ In particular we can rewrite the refined Agrachev's bound as:
\begin{equation}\label{refined}b(X)\leq n+1-2(\mu-\nu)+\sum_{\nu\leq j\leq \mu-1}b(\Omega_j(\eps))\end{equation}
The rest of the proof is devoted to bound the last term $\sum b(\Omega_j(\eps)).$ First notice that Corollary \ref{boundary} says that each nonempty $\Omega_{j}(\eps)$ is a topological submanifold of $S^{k-1}$ with boundary and nonempty interior; thus applying Lemma \ref{boundary2} we get for such a manifold $$b(\Omega_j(\eps))=\frac{1}{2}b(\partial \Omega_j(\eps)),\quad \nu\leq j\leq \mu-1$$ For convenience of notation let us rename these boundaries as following: 
$$C_j=\partial \Omega_{\nu+j-1}(\eps),\quad j=1, \ldots, l=\mu-\nu$$ Thus (\ref{refined}) can be rewritten as:
\begin{equation}\label{refined2}b(X)\leq n+1-2(\mu-\nu)+\frac{1}{2}\sum_{j=1}^{l}b(C_j).\end{equation}
Let us now analize the structure of $\Sigma_\eps=\Sigma_\eps^{(1)}.$ By construction this set equals the union of all the $C_j$'s, but this union is not disjoint since $\Sigma_\eps$ might have singularities and these singularities precisely occur when two sets $C_j$ and $C_{j+1}$ intersect (this immediately follows from the fact that $q_\eps$ is transversal to all the strata of $Z$ and that $\Sigma_\eps$ is stratified by the preimages of the strata of $Z$ as described in Lemma \ref{perturb}). For convenience of notation, let us write $S(\omega, j)$ if the point $\omega$ has the property that there exists a sequence $\{\omega_n\}_{n\geq0}$ converging to $\omega$ such that $\ii^-(q_\eps(\omega_n))\geq j$. Corollary \ref{boundary} implies now that $C_j=\Omega_{\nu+j-1}(\eps)\cap \textrm{Cl}(\Omega_{\nu+j-1}(\eps)^c)$, i.e.
\begin{equation}\label{cj}C_j=\{\omega \,|\, \ii^-(q_\eps(\omega))\leq j\quad\textrm{and} \quad S(\omega, j+1)\}\end{equation}
Let us denote by $I_r$ the set of all subsets $\alpha$ of $\{1,\ldots, l\}$ consisting of $r$ consecutive integers; if $\alpha=\{\alpha_1,\ldots, \alpha_r\}\in I_r$ let us assume its elements are ordered in increasing way $\alpha_1\leq\cdots \leq \alpha_r$. Let now $r\in\{1,\ldots, l\}$, $\alpha \in I_r$ and for $i\in\{1,\ldots, l-r\}$ consider the sets:
$$E_{i,r}=\bigcup_{\alpha_1\leq i}\bigcap_{j\in\alpha}C_j,\quad F_{i+1,r}=\bigcap_{j=i+1}^{i+r}C_j$$
For example if $r=1$ we have $E_{i,1}=C_1\cup\cdots \cup C_i$ and $F_{i+1,1}=C_{i+1}$; if $r=2$ then $E_{i,2}=(C_{1}\cap C_2)\cup\cdots\cup (C_{i-1}\cap C_{i})$ and $F_{i+1,2}=C_{i+1}\cap C_{i+2}.$ We have the following combinatorial properties: $$E_{i,r}\cup F_{i+1,r} =E_{i+1,r}\quad \textrm{and}\quad E_{i,r}\cap F_{i+1,r}=\bigcap_{j=i}^{i+r}C_j.$$
The first equality is clear from the definition; for the second one notice that equation (\ref{cj}) implies: \begin{align*} E_{i,r}\cap F_{i+1,r}&=\bigcup_{\alpha_1\leq i}\{\omega\,|\,\ii^-(q_\eps(\omega))\leq \alpha_1\quad \textrm{and}\quad S(\omega, l+r+1)\}\\
&= \{\omega\,|\,\ii^-(q_\eps(\omega))\leq i\quad \textrm{and}\quad S(\omega, l+r+1)\}\\
&=\bigcap_{j=i}^{i+r}C_j.\end{align*} 
Plugging these equalities in the semialgebraic Mayer-Vietoris exact sequence of the pair $(E_{i,r}, F_{i+1,r})$ we get: \begin{equation}\label{sum}b\bigg( \bigcap_{j=i+1}^{i+r}C_j\bigg)\leq b\bigg(\bigcup_{\alpha_1\leq j+1}\bigcap_{j\in\alpha}C_j \bigg)+b\bigg(\bigcap_{j=i}^{i+r}C_j\bigg)-b\bigg(\bigcup_{\alpha_1\leq j}\bigcap_{j\in\alpha}C_j\bigg)\end{equation}
If we take now the sum of all these inequalities we obtain:
\begin{equation}\label{eq}\sum_{i=0}^{l-r} b\bigg( \bigcap_{j=i+1}^{i+r}C_j\bigg)\leq b\bigg(\bigcup_{\alpha_1\leq l-r+1}\bigcap_{j\in\alpha}C_j \bigg)+\sum_{i=0}^{l-r-1} b\bigg(\bigcap_{j=i}^{i+r}C_j\bigg) \end{equation}
In fact when we take the sum of all inequalities (\ref{sum}) all the first and the last terms in the r.h.s. cancel (since they appear with opposite signs), except for the last inequality which gives the contribution $b\big(\bigcup_{\alpha_1\leq l-r+1}\bigcap_{j\in\alpha}C_j \big).$ Moreover since $q_\eps$ is transversal to all strata of $Z$, then Proposition \ref{zetar} implies:
$$\bigcup_{\alpha_1\leq l-r+1}\bigcap_{j\in\alpha}C_j =\bigcup_{\alpha\in I_r}\bigcap_{j\in \alpha}C_j=\Sigma_\eps^{(r)}$$
Substituting this formula into equation (\ref{eq}) we finally get:
\begin{equation}\label{interm}\sum_{i=0}^{l-r} b\bigg( \bigcap_{j=i+1}^{i+r}C_j\bigg)\leq b\big(\Sigma_\eps^{(r)}\big)+\sum_{i=0}^{l-r-1} b\bigg(\bigcap_{j=i}^{i+r}C_j\bigg) \end{equation}
In particular we have the following chain of inequalities (we keep on substituting at each step what we get from (\ref{interm})):
\begin{align*}\sum_{i=1}^{l}b(C_i)&=\sum_{i=0}^{l-1} b\bigg( \bigcap_{j=i+1}^{i+1}C_j\bigg)\leq b\big(\Sigma_\eps^{(1)}\big)+\sum_{i=0}^{l-2} b\bigg(\bigcap_{j=i}^{i+1}C_j\bigg)\\
&\leq b\big(\Sigma_\eps^{(1)}\big)+b\big(\Sigma_\eps^{(2)}\big)+\sum_{i=0}^{l-3} b\bigg(\bigcap_{j=i}^{i+2}C_j\bigg)\leq \cdots\\
&\leq \sum_{r\geq 1}b\big(\Sigma_\eps^{(r)}\big)\end{align*}
Substituting this into equation (\ref{refined2}) and recalling that $\Sigma_\eps^{(r)}$ is empty for $\binom{r+1}{2}\geq k$ the result follows.
\end{proof}

As a corollary we immediately get the following theorem.

\begin{teo}\label{topological}Let $\sigma_k=\lfloor \frac{1}{2}(-1+\sqrt{8k-7})\rfloor$; then 
$$\emph{\textrm{Topological bound:}}\quad  b(X)\leq b(\RP^n)+\frac{1}{2}\sum_{r=1}^{\sigma_k} b\big(\Sigma_\eps^{(r)}\big)$$
\end{teo}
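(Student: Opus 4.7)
The plan is to derive this as an essentially immediate corollary of Theorem \ref{main}. I would pick the positive definite form $p$ and small $\eps>0$ provided there, so that the inequality
$$b(X)\leq n+1-2(\mu-\nu)+\tfrac{1}{2}\sum_{r\geq 1}b\big(\Sigma_\eps^{(r)}\big)$$
holds, with $\mu$ and $\nu$ the maximum and minimum of $\ii^-$ on the image of $q_\eps$. The goal is then to simplify the constant term and truncate the tail of the sum into the closed form claimed.

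For the constant term I would simply note that $\mu\geq\nu$ by definition, and that with $\Z_2$ coefficients $b(\RP^n)=n+1$, so $n+1-2(\mu-\nu)\leq b(\RP^n)$. This trades the (potentially sharper) quantity $n+1-2(\mu-\nu)$ for the ambient topological invariant $b(\RP^n)$; the right-hand side then depends only on $\RP^n$ and on the dual determinantal sets $\Sigma_\eps^{(r)}$, which is the conceptual content of the topological bound.

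For the tail of the sum I would invoke part (i) of Theorem \ref{main} (equivalently Corollary \ref{codim}): transversality of $q_\eps$ to the stratum $N_r$ forces $\Sigma_\eps^{(r)}$ to have codimension $\binom{r+1}{2}$ in $S^{k-1}$, so it must be empty as soon as $\binom{r+1}{2}\geq k$. Solving this condition for integers $r$ (and using that $r(r+1)$ is always even) is equivalent to $r>\tfrac{1}{2}(-1+\sqrt{8k-7})$; thus every summand with $r>\sigma_k$ vanishes and can be dropped, leaving the finite sum displayed in the statement.

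There is essentially no obstacle here: the technical content — the transversality setup of Lemma \ref{perturb}, the manifold-with-boundary structure from Corollary \ref{boundary}, and the Mayer--Vietoris telescoping inside the proof of Theorem \ref{main} — has already been carried out. The corollary is a cosmetic rephrasing that bundles two independent simplifications (bounding the constant by $b(\RP^n)$ and cutting the sum at $\sigma_k$) on top of Theorem \ref{main}, and may be applied in either order.
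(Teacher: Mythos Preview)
Your proposal is correct and follows precisely the same route as the paper: the theorem is a cosmetic corollary of Theorem \ref{main}, obtained by dropping the nonnegative term $2(\mu-\nu)$, identifying $n+1=b(\RP^n)$, and using Corollary \ref{codim} (part (i) of Theorem \ref{main}) to truncate the sum at $\sigma_k$. Your justification of the cutoff via the codimension inequality $\binom{r+1}{2}\geq k$ is exactly the content the paper invokes.
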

\begin{proof}This is simply a reformulation of previous theorem in a nicer form. In fact $\mu-\nu\geq 0$, $n+1=b(\RP^n)$ and $\sigma_k$ is given by Corollary \ref{codim}.
\end{proof}

In the particular case we assume also nondegeneracy of the linear system $W$ we get the following theorem.

\begin{teo}\label{nondegeneracy}For a generic choice of $W=\textrm{\emph{span}}\{q_1, \ldots, q_k\}$ and $r\geq 1$
 $$\Sigma_W^{(r)}=\{q \in W\backslash\{0\}\,|\, \dim \ker (q)\geq r\}=\textrm{\emph{Sing}}\big(\Sigma_W^{(r-1)}\big)$$
 and the following formula holds:
$$b(X)\leq b(\RP^n)+\frac{1}{2}\sum_{r\geq 1}b\big(\Sigma_W^{(r)}\big)$$
\begin{proof}
Let us fix a scalar product on $\Q_{n+1}$; then for a generic choice of $q_1, \ldots, q_k$ the unit sphere $S^{k-1}$ in $W$ is transversal to all strata of $Z=\coprod N_r$ and the first part of the statement follows from equation (\ref{sing}).\\
Notice that the set of linear affine embeddings $f:\R^k\to \Q_{n+1}$ whose restriction to $S^{k-1}$ is transversal to all the strata of $Z$ is an open dense set; moreover if two such embeddings $f_0$ and $f_1$ are joined by a nondegenerate homotopy, then by Thom Isotopy Lemma the two sets $f_0^{-1}(Z^{(r)})$ and $f_1^{-1}(Z^{(r)})$ are homotopy equivalent. In particular for $\eps>0$ small enough the map $q_\eps$ given by Theorem \ref{main} is nondegenerate homotopic to $S^{k-1}\hookrightarrow \Q_{n+1}$ and thus for every $r\geq 0$ we can substitute:
$$b\big( \Sigma_\eps^{(r)}\big)=b\big(\Sigma_W^{(r)}\big)$$
in equation (\ref{boundstrong}), which gives the result.
\end{proof}
\end{teo}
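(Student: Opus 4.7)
The plan is to reduce the theorem to Theorem \ref{main} by a genericity argument: for a generic $W$, the small $\eps$-perturbation required by Theorem \ref{main} can be undone without altering the Betti numbers of the discriminant strata.

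First I would establish the singular-locus identity $\Sigma_W^{(r)} = \mathrm{Sing}(\Sigma_W^{(r-1)})$. Fix a scalar product on $\Q_{n+1}$ and recall the Nash stratification $Z = \coprod_r N_r$ with $N_r = Z^{(r)}\setminus Z^{(r+1)}$, together with the identity $\mathrm{Sing}(Z^{(r-1)}) = Z^{(r)}$ from (\ref{sing}). A parametric Sard argument on the Grassmannian of $k$-planes in $\Q_{n+1}$, entirely analogous to the one proving Lemma \ref{perturb} (but applied to the linear inclusion rather than its affine translate), shows that for generic $W$ the unit sphere $S_W \subset W$ is transverse to every stratum $N_r$. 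Since transversality to a Whitney-regular stratification respects the singular-locus relation, pulling back to $S_W$ gives $\Sigma_W^{(r)} = \mathrm{Sing}(\Sigma_W^{(r-1)})$.

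Next I would invoke Theorem \ref{main} applied to the basis $q_1,\ldots,q_k$. It furnishes a positive definite $p \in \Q_{n+1}$ and, for all sufficiently small $\eps > 0$, the inequality
$$b(X) \;\leq\; n+1 - 2(\mu-\nu) + \frac{1}{2}\sum_{r\geq 1} b\big(\Sigma_\eps^{(r)}\big) \;\leq\; b(\RP^n) + \frac{1}{2}\sum_{r\geq 1} b\big(\Sigma_\eps^{(r)}\big).$$
The task therefore reduces to showing that $b(\Sigma_\eps^{(r)}) = b(\Sigma_W^{(r)})$ for each $r\geq 1$. This is the key step, handled by a Thom Isotopy argument. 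Consider the affine homotopy $f_t:S^{k-1}\to\Q_{n+1}$ defined by $f_t(\omega) = \omega q - t\eps p$ for $t\in[0,1]$: at $t=0$ it is the linear embedding $S_W\hookrightarrow \Q_{n+1}$, so $f_0^{-1}(Z^{(r)}) = \Sigma_W^{(r)}$; at $t=1$ it is the map $q_\eps$ of Theorem \ref{main}, so $f_1^{-1}(Z^{(r)}) = \Sigma_\eps^{(r)}$. Transversality to each stratum $N_r$ is an open condition on the $1$-jet, and both endpoints are transverse -- $f_0$ by the genericity of $W$ established above, $f_1$ by Lemma \ref{perturb}. Thom's Isotopy Lemma applied to the Nash-stratified set $Z$ then yields a homeomorphism of pairs $(S^{k-1},f_0^{-1}(Z^{(r)}))\cong (S^{k-1},f_1^{-1}(Z^{(r)}))$ for each $r$, hence $b(\Sigma_W^{(r)}) = b(\Sigma_\eps^{(r)})$, and substitution into the displayed inequality finishes the proof.

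The main obstacle is verifying that the entire path $\{f_t\}_{t\in[0,1]}$ stays transverse to every stratum $N_r$ simultaneously: a priori the segment $\{t\eps p : t\in(0,1]\}$ might cross the critical-value locus of the second-factor projection from the proof of Lemma \ref{perturb}. This is resolved by noting that this locus is semialgebraic of codimension at least one, so the half-line $\{tp : t > 0\}$ meets it only at a locally finite set of points; after shrinking $\eps$, the segment avoids this bad locus entirely, and combined with transversality at $t=0$ and the openness of the transversality condition, simultaneous transversality along the whole homotopy is secured.
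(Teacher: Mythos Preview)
Your proof is correct and follows essentially the same approach as the paper: establish transversality of $S_W$ to the strata of $Z$ by a Sard-type argument, invoke Theorem \ref{main}, and then use a Thom Isotopy argument along the affine homotopy $f_t(\omega)=\omega q - t\eps p$ to identify $b(\Sigma_\eps^{(r)})$ with $b(\Sigma_W^{(r)})$. You have in fact been more explicit than the paper in spelling out the homotopy and in addressing the obstacle that the entire path must remain transverse; the paper handles this point tersely by asserting the existence of a ``nondegenerate homotopy'' and appealing to the open-dense nature of the transverse embeddings.
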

\begin{remark}
From the point of view of classical agebraic geometry, it is natural to consider the projectivization $\mathbb{P}W$ rather than $W$ itself; similarly we can consider $\mathbb{P}\Sigma$ and by the Gysin exact sequence we get, for a generic $W$:
$$b(X)\leq b(\RP^n)+\sum_{r\geq 1}b\big( \mathbb{P}\Sigma_W^{(r)}\big)$$
Unfortunately there is no such formula for the general case: this is due to the fact that the perturbation $\Sigma_\eps$ is not invariant by the antipodal map.\\
Moreover, the way we got our formula shows that if we are interested in the topology of the complement of $X$, then we can remove the $b(\RP^n)$ from the sum and get again for a generic choice of $W$:
$$b(\RP^n\backslash X)\leq\sum_{r\geq 1} b\big( \mathbb{P}\Sigma_W^{(r)}\big)$$
\end{remark}
\section{A numerical bound}

From the previous discussion we can derive quantitative bounds on the homological complexity of the intersection of real quadrics. We start by proving the following proposition, which essentially refines Corollary 2.3 of \cite{BaBa}. 
\begin{propo}\label{sphericalbound}
Let $Y\subset S^{k-1}$ be defined by polynomial equations of degree less or equal than $d$. Then:
$$b(Y)\leq (2d)^{k-1}+\frac{1}{8}\binom{k+1}{3}(6d)^{k-2}$$
\end{propo}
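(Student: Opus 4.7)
The plan is to follow a classical Milnor-style perturbation argument adapted to the sphere, using Lemma~\ref{boundary2} as the key technical tool to trade a bound on $Y$ for one on a smooth algebraic hypersurface.

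First I would reduce to the case of a single equation. Write $Y = \{p_1 = \cdots = p_s = 0\} \cap S^{k-1}$ with $\deg p_i \leq d$, and set $p = p_1^2 + \cdots + p_s^2$; then $\deg p \leq 2d$ and $Y = \{p = 0\} \cap S^{k-1}$, so the number of defining equations is irrelevant. By semialgebraic Sard's lemma (as used already in Lemma~\ref{perturb}), all sufficiently small $\epsilon > 0$ are regular values of $p|_{S^{k-1}}$, and for such an $\epsilon$ the sublevel set $Y_\epsilon = \{p \leq \epsilon\} \cap S^{k-1}$ is a compact semialgebraic topological manifold with smooth boundary. A gradient flow argument (or Hardt's triviality theorem, just as in the proof of Lemma~\ref{union}) shows that $Y$ is a deformation retract of $Y_\epsilon$ for $\epsilon$ small; hence $b(Y) = b(Y_\epsilon)$.

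Next, I would apply Lemma~\ref{boundary2}. If $Y_\epsilon$ equals all of $S^{k-1}$ then $Y = S^{k-1}$ and the stated inequality is trivial. Otherwise $Y_\epsilon$ is a proper compact semialgebraic submanifold of $S^{k-1}$ with nonempty boundary and nonempty interior, so
$$b(Y) = b(Y_\epsilon) = \tfrac{1}{2}\, b(\partial Y_\epsilon).$$
The boundary $\partial Y_\epsilon = \{p = \epsilon\} \cap S^{k-1}$ is a smooth algebraic hypersurface of the sphere, realized in $\mathbb{R}^k$ as the transverse intersection of the quadric $\{\|x\|^2 = 1\}$ with a hypersurface of degree at most $2d$. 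It remains to estimate $b(\partial Y_\epsilon)$.

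The heart of the proof is then a sharp Milnor-type bound for this smooth codimension-two complete intersection on $S^{k-1}$. One natural route is to homogenize $p$ modulo $\|x\|^2 - 1$ so that $\partial Y_\epsilon$ becomes (the double cover of) a smooth projective hypersurface of degree $2d$ in $\mathbb{RP}^{k-1}$, and then invoke a sharp Oleinik-Petrovsky-Thom estimate, whose leading contribution is $(2d)^{k-1}$. Doubling this bound and dividing by $2$ via Lemma~\ref{boundary2} already recovers the leading term. The correction term $\tfrac{1}{8}\binom{k+1}{3}(6d)^{k-2}$ is then obtained by carefully tracking the extra critical points introduced by the perturbation along a one-parameter family of regular values, in the spirit of the refined bound of \cite{BaBa} (Corollary~2.3) that this proposition claims to sharpen. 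The main obstacle is precisely this last bookkeeping: a direct application of Milnor's theorem to $\{p - \epsilon = 0, \|x\|^2 - 1 = 0\}$ via sum of squares yields the correct exponent $k-1$ but a considerably worse constant (of order $(8d)^{k-1}$), so obtaining the leading constant $2^{k-1}$ together with the explicit lower-order correction requires exploiting that we are cutting $S^{k-1}$ by a \emph{single} additional equation and doing a careful Morse-theoretic count rather than a black-box invocation of Milnor's bound.
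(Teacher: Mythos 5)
Your reduction to $b(Y)=\tfrac12 b(\partial Y_\eps)$ via the sum-of-squares polynomial $p$, a regular sublevel value, and Lemma~\ref{boundary2} is correct and is essentially the paper's reduction: the paper retracts the superlevel set $\{F\geq\eps\}$ onto $S^{k-1}\setminus Y$ and invokes Alexander--Pontryagin duality, whereas you retract $\{p\leq\eps\}$ onto $Y$ directly, but both lead to one half of the complexity of the same smooth boundary hypersurface. After that, however, the argument has two real problems.

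First, the passage to a degree-$2d$ hypersurface in $\RP^{k-1}$ does not go through. The set $\{p=\eps\}\cap S^{k-1}$ is invariant under the antipodal map only when $p$ is an even polynomial, and $\sum p_i^2$ need not be even for arbitrary $p_i$ of degree $\le d$. The paper avoids this entirely by staying in $\R^k$: it homogenizes the \emph{pair} of equations $F-\eps=0$ and $\|\omega\|^2-1=0$ (degrees $2d$ and $2$) to a complete intersection in $\RP^k$, not $\RP^{k-1}$, checks there are no real points at infinity, and never passes to a quotient or a double cover.

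Second, and more seriously, you explicitly leave the derivation of the two constants as ``the main obstacle,'' but those constants \emph{are} the proposition: without them there is nothing to prove, and they do not come from a Morse-theoretic count of perturbation critical points. After a real perturbation of the two homogeneous polynomials making the complex zero locus a smooth complete intersection $C\subset\CP^k$ of multidegree $(2,\delta)$ with $\delta=2d$ (the real locus was already smooth, so its topology is unchanged), the paper applies Smith's inequality $b(\partial Y_\eps)\leq b(C)$ and then \emph{computes} $\chi(C)$ exactly as the $(k-2)$-th Taylor coefficient of Hirzebruch's generating function $H(x)=\tfrac{2\delta(1+x)^{k+1}}{(1+2x)(1+\delta x)}$. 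Feeding this into $b(C)=(k-1)(1+(-1)^{k+1})+(-1)^k\chi(C)$ and bounding the resulting double sum term by term gives $b(C)\leq 2\delta^{k-1}+\tfrac14\binom{k+1}{3}(3\delta)^{k-2}$; halving yields the claimed bound. That explicit algebraic computation, not a critical-point bookkeeping, is the missing content of your sketch.
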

\begin{proof}
Using Alexander-Pontryiagin duality our problem is  equivalent to that of bounding $b(S^{k-1}\backslash Y)=b(Y).$  Let $Y$ be defined on the sphere by the polynomials $f_1, \ldots, f_R$ and consider the new polynomial $F=f_1^2+\cdots+f_R^2$; then clearly $Y$ is defined also by $\{F=0\}$ on the sphere, and since $F\geq 0$ we have $S^{k-1}\backslash Y=\{F|_{S^{k-1}}>0\};$ notice that the degree of $F$ is $\delta=2d.$ By semialgebraic triviality for $\eps>0$ small enough we have the homotopy equivalences:
$$S^{k-1}\backslash Y\sim\{F|_{S^{k-1}}>\eps\}\sim \{F|_{S^{k-1}}\geq \eps\}$$
Let now $\eps>0$ be a small enough regular value of $F|_{S^{k-1}}$; then $\{F|_{S^{k-1}}\geq \eps\}$ is a submanifold of the sphere with smooth boundary $\{F|_{S^{k-1}}=\eps\}$ and by Lemma \ref{boundary2} we obtain:
$$b\big(S^{k-1}\backslash Y\big)=\frac{1}{2}b(\{F|_{S^{k-1}}=\eps\}).$$
Thus we reduced to study the topology of $\{F|_{S^{k-1}}=\eps\}$: this set is given in $\R^k$ by the two equations $F-\eps=0$ and $\|\omega\|^2-1=0$. Equivalently we can consider their homogenization $g_1={}^hF-\eps \omega_0^{2d}=0$ and $g_2=\|\omega\|^2-\omega_0^2=0$ and their common zero locus in $\RP^k$: since there are no common solutions on $\{\omega_0=0\}$ (the hyperplane at infinity) these two equations still define $\{F|_{S^{k-1}}=\eps\}$. By Fact 1 in \cite{Le3} it follows that we can \emph{real} perturb the coefficients of $g_1$ and $g_2$ and make their common zero set in $\CP^k$ a smooth complete intersection. This perturbation of the coefficients will not change the topology of the zero locus set in $\RP^k$ since before the perturbation it was a smooth manifold; the fact that the perturbation is \emph{real} allows us to use Smith's theory. Thus let $\tilde g_1$ and $\tilde g_2$ be the perturbed polynomials; we have:
$$b(\{F|_{S^{k-1}}=\eps\})=b(Z_{\RP^k}(\tilde g_1, \tilde g_2))\leq b(Z_{\CP^k}(\tilde g_1, \tilde g_2))$$
where in the last step we have used Smith's inequalities. Eventually we end up with the problem of bounding the homological complexity of the complete intersection $C$ of multidegree $(2,\delta)$ in $\CP^k.$ Let us compute first the Euler characteristic of $C.$ By Hirzebruch's formula this is given by the $(k-2)$th coefficient in the series expansion around zero of the function:
$$H(x)=\frac{2\delta(1+x)^{k+1}}{(1+2x)(1+\delta x)}$$
In other words we have:
$$\chi(C)=\frac{H^{(k-2)}(0)}{{(k-2)!}}$$
To compute this number let us write $H(x)= F(x)G(x)$ with $F(x)=\frac{2\delta(1+x)^{k+1}}{1+2x}$ and $G(x)=\frac{1}{1+\delta x}.$ In this way we have:
$$H^{(k-2)}(0)=\sum_{j=0}^{k-2}\binom{k-2}{j}F^{(j)}(0)G^{(k-2-j)}(0).$$
To compute the derivatives of $F$ we do the same trick as for $H:$ we write $F(x)=A(x)B(x)$ where $A(x)=2\delta(1+x)^{k+1}$ and $B(x)=\frac{1}{1+2x}$. In this way, using the series expansion $B(x)=\sum_{i=0}^{\infty}(-1)^{i}2^{i}x^i$, we get:
$$F^{(j)}(0)=j!2(-2)^{j}\delta \sum_{i=0}^{j}\binom{k+1}{i}\bigg(-\frac{1}{2}\bigg)^i$$
We also have $G^{(i)}(0)=(-1)^{i}\delta^{i}i!$ (this last equality follows from the series expansion around zero $G(x)=\sum_{i=0}^{\infty}(-1)^{i}\delta^{i}x^i$). Plugging these equalities into the above one we get:
$$\chi(C)=(-1)^k\sum_{j=0}^{k-2}\bigg(\sum_{i=0}^j \binom{k+1}{i}\bigg(-\frac{1}{2}\bigg)^i\bigg)2^{j+1}\delta^{k-j-1}$$
Recall now that the formula $b(C)=(k-1)(1+(-1)^{k+1})+(-1)^k \chi(C)$ gives:
\begin{align*}b(C)&=(k-1)(1+(-1)^{k+1})+\sum_{j=0}^{k-2}\bigg(\sum_{i=0}^j \binom{k+1}{i}\bigg(-\frac{1}{2}\bigg)^i\bigg)2^{j+1}\delta^{k-j-1}\\
&= (k-1)(1+(-1)^{k+1})+2\delta^{k-1}+\sum_{j=1}^{k-2}\bigg(\sum_{i=0}^j \binom{k+1}{i}\bigg(-\frac{1}{2}\bigg)^i\bigg)2^{j+1}\delta^{k-j-1}\end{align*}
Since $|\sum_{i=0}^j \binom{k+1}{i}(-\frac{1}{2})^i|\leq \binom{k+1}{3}(\frac{3}{2})^{k-2}$, from the above equality we can deduce:
$$b(C)\leq   2(k-1)+2\delta^{k-1}+2\delta^{k-1}\bigg(\frac{3}{2}\bigg)^{k-2}\binom{k+1}{3}\sum_{j=1}^{k-2}\bigg(\frac{2}{\delta}\bigg)^j$$
Since now $\delta^{k-1}\sum_{j=1}^{k-2}(\frac{2}{\delta})^j=2(\delta^{k-2}+2\delta^{k-3}+\cdots+2^{k-3}\delta+2^{k-2})$ and $2^{k-2}\geq (k-1)$, we can bound $2(k-1)+ 2\delta^{k-1}\sum_{j=1}^{k-2}(\frac{2}{\delta})^j$ with $2^{k}\delta^{k-2}$ and finally write:
\begin{equation}\label{completedelta}b(C)\leq 2\delta^{k-1}+\frac{1}{4}\binom{k+1}{3}(3\delta)^{k-2}\end{equation}
The previous inequality, together with $b(Y)\leq \frac{1}{2}b(C)$ and $\delta=2d$ gives the result.\end{proof}

\begin{remark}
We notice that as long as $d$ is large enough with respect to $k$, the previous bound improves Milnor's one, which gives $b(Y)\leq d(2d-1)^{k-1}$; here it is essential that $Y$ is on the sphere, as we used Alexander-Pontryiagin duality.
\end{remark}
As a corollary we get the following theorem.

\begin{teo}\label{numerical} Let $X$ be the intersection of $k$ quadrics in $\RP^n$. Then:
$$b(X)\leq O(n)^{k-1}$$
\end{teo}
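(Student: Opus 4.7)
The plan is to combine the topological bound of Theorem \ref{topological} with the spherical estimate of Proposition \ref{sphericalbound}, applied term-by-term to each symmetric determinantal stratum $\Sigma_\eps^{(r)}\subset S^{k-1}$.

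First, fix a scalar product on $\Q_{n+1}$ and identify quadratic forms with symmetric matrices. For the positive definite form $p$ provided by Lemma \ref{perturb} and $\eps>0$ sufficiently small, the set
$$\Sigma_\eps^{(r)}=\{\omega\in S^{k-1}\,|\,\dim\ker(\omega q-\eps p)\geq r\}$$
is cut out on the sphere by the simultaneous vanishing of all $(n+2-r)\times(n+2-r)$ minors of the symmetric matrix $\omega q-\eps p$, whose entries are affine linear in $\omega$. Consequently $\Sigma_\eps^{(r)}$ is defined on $S^{k-1}$ by polynomials of degree at most $d_r=n+2-r\leq n+1$. Applying Proposition \ref{sphericalbound} to each $\Sigma_\eps^{(r)}$ I would obtain
$$b\big(\Sigma_\eps^{(r)}\big)\leq (2d_r)^{k-1}+\tfrac{1}{8}\binom{k+1}{3}(6d_r)^{k-2}\leq (2(n+1))^{k-1}+O(n)^{k-2},$$
uniformly in $r$. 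Here the implied constant depends only on $k$.

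Next I would invoke Theorem \ref{topological}, which gives
$$b(X)\leq b(\RP^n)+\frac{1}{2}\sum_{r=1}^{\sigma_k}b\big(\Sigma_\eps^{(r)}\big),$$
together with the key fact, built into the bound, that the number of nonempty summands is $\sigma_k=\lfloor\tfrac{1}{2}(-1+\sqrt{8k-7})\rfloor$, a quantity depending only on $k$ (not on $n$). Substituting the per-stratum estimate above yields
$$b(X)\leq (n+1)+\frac{\sigma_k}{2}\bigl((2(n+1))^{k-1}+O(n)^{k-2}\bigr)=O(n)^{k-1},$$
which is the claimed inequality (the case $k=1$ being trivial since $X$ is then a single quadric, with complexity at most $n+1$).

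The argument is essentially immediate once the preparatory work is in hand, so there is no real obstacle; the only point deserving care is the elementary verification that the defining minors of $\Sigma_\eps^{(r)}$ really do have degree bounded by $n+1$ uniformly in $r$, so that the same exponent $k-1$ controls every term in the finite sum. Because $\sigma_k$ does not grow with $n$, the dependence on $n$ in the final bound comes entirely from the leading $(2(n+1))^{k-1}$ factor of the spherical estimate, which is exactly where the duality between the number $k$ of equations and the number $n+1$ of variables is being exploited.
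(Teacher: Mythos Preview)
Your proposal is correct and follows essentially the same route as the paper: invoke Theorem \ref{topological}, observe that each $\Sigma_\eps^{(r)}$ is cut out on $S^{k-1}$ by the $(n+2-r)$-minors of $\omega q-\eps p$ (polynomials of degree $n-r+2\leq n+1$), apply Proposition \ref{sphericalbound} to each of the $\sigma_k$ strata, and sum. The paper's own proof is exactly this argument, stated in two sentences.
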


\begin{proof}
We use the bound given in Theorem \ref{topological}: the proof is essentially collecting the estimates given by the previous proposition for each summand $b\big(\Sigma_{\eps}^{(r)}\big)$. By construction we have that $\Sigma_\eps^{(r)}$ is a determinantal variety and it is defined by polynomials $f_1,\ldots, f_R$ of degree $d=n-r+2$ on the sphere $S^{k-1}$. The result now follows by plugging the bounds given in Proposition \ref{sphericalbound}  in the summands of Theorem \ref{topological} (there are only $\sigma_k$ of such summand). \end{proof}
\begin{remark}
As suggested to the author by S. Basu, there are two other possible ways to get such numerical estimates. The first one is using a general position argument similar to \cite{BaBa} and combinatorial Mayer-Vietoris bounds as in \cite{BPR}; the second one is using again a general position argument and stratified Morse theory (which in the semialgebraic case is very well controlled, as noticed in \cite{BasuCell}). Both this approaches work also in the \emph{affine} case producing a bound of the same shape; in the projective case it seems that also the leading \emph{coefficient} is the same. As for numerical uniform bounds, the advantage of the first one is that it is applicable to more general cases, i.e. besides the quadratic one. To the author knowledge nothing has been published on the subject: together with S. Basu he plans to give an account of these different techniques in a forthcoming paper. \end{remark}
We introduce the following notation:
$$B(k,n)=\max\{b(X)\,|\,\textrm{$X$ is the intersection of $k$ quadrics in $\RP^n$}\}.$$
We discuss now the sharpness of the previous bound, showing that:
$$B(k,n)=O(n)^{k-1}$$
Theorem \ref{numerical} gives the inequality $B(k,n)\leq O(n)^{k-1}$; for the opposite inequality we need to produce for every $k$ and $n$ an intersection $M_\R$ of $k$ quadrics in $\RP^n$ with $b(M_\R)=O(n)^{k-1}.$
Let us first notice that repeating the same argument of Proposition \ref{sphericalbound}, we can deduce that the complete intersection $M$ of $k$ quadrics in $\CP^n$ has:
$$b(M)=b(M;\Z)=O(n)^{k-1}$$
(this computation is already performed in \cite{BaBa}). It is a known result that there exists a real maximal $M$, i.e. a complete intersection of $k$ \emph{real} quadrics in $\CP^n$ whose real part $M_\R$ satisfies:
$$b(M_\R)=b(M)$$
Such an existence result holds in general for any complete intersection of multidegree $(d_1, \ldots, d_k)$.  An asymptotic construction is provided in \cite{ItVi1}; the proof for the general case has not been published yet but the author has been informed that it will be the subject of a forthcoming paper of the same authors as \cite{ItVi1}. 
\section{Examples}
\begin{example}[$k=2$]
In the case $X$ is the intersection of \emph{two} quadrics in $\RP^n$, the ideas previously discussed produces the sharp bound $b(X)\leq 2n$: in fact by inequality (\ref{refined2}) we have:
$$b(X)\leq n+1-2(\mu-\nu) +\frac{1}{2}b(\Sigma_\eps)$$ 
(every $\Sigma_\eps^{(r)}$ with $r>1$ is empty). On the other hand $\Sigma_\eps$ is defined by an equation of degree $n+1$ on the circle $S^1$ and thus it consists of at most $2(n+1)$ points. This gives:
$$b(X)\leq n+1-2(\mu-\nu)+n+1\leq 2n$$
(in the case $\mu=\nu$ we have $b(X)\leq n+1$ ). Moreover for every $n$ there exist two quadrics in $\RP^n$ whose intersection $X$ satisfies $b(X)=2n$ (see Example 2 of \cite{Le2}). Notice that the example provided there in the case $n$ \emph{odd} gives a \emph{singular} $X.$ Using the notation introduced above, this reads:
$$B(2,n)=2n$$
What is interesting now is that for \emph{odd} $n$  this number $B(2,n)$ is attained only by a singular intersection of quadrics: the nonsingular one has at most $b(X)\leq 2n-2$ (this follows from Smith's inequality and the Hirzebruch's formula for the complete intersection of two quadrics in $\CP^n$). For a more detailed discussion the reader is referred to section 6 of \cite{Le2}.
\end{example}

\begin{example}[$k=3$]
In the case $X$ is the intersection of three quadrics then, inequality (\ref{refined2}) gives:
$$b(X)\leq n+1-2(\mu-\nu) +\frac{1}{2}b(\Sigma_\eps)$$ 
Again, since the codimension of $\Sigma_\eps^{(r)}$ is greater than three for $r\geq 2$, then in this case all these sets except $\Sigma_\eps$ are empty (since $k=3$ these sets are subsets of the sphere $S^2$). This also says that $\Sigma_\eps$ is a smooth curve on $S^2$; let $f=\|\omega\|^2-1 $ and $g=\det(\omega q-\eps p)$  be the polynomials defining this curve and $F,G$ their homogenization. Then there exists a \emph{real} perturbation $\tilde G$ of $G$ that makes the common zero locus $C$ of $\tilde G$ and $F$ a smooth complete intersection in $\CP^3$. Since $\Sigma_\eps$ is smooth, the real part $C_\R$ of this complete intersection has the same topology of it and by Theorem \label{topological}:
$$b(X)\leq \frac{1}{2}b(C)+O(n)$$
Recall that equation (\ref{completedelta}) was for a complete intersection of multidegree $(2,\delta)$ in $\CP^3$; specifying it to this case $\delta=n+1$ we get $b(C)\leq 2n^2+O(n)$, which plugged into the previous inequality gives:
$$b(X)\leq n^2+O(n).$$ 
(indeed Theorem 1 of \cite{Le3} gives the refined bound $b(X)\leq n^2+n$).
We notice now that in this case:
$$B(3,n)=n^2+O(n).$$
In fact the previous inequality provides the upper bound, and the lower bound is given by the existence of almost maximal real complete intersections of three quadrics (see \cite{Krasnov2} for an explicit construction of such maximal varieties).\\
In the case $X$ is smooth, using the spectral sequence approach the authors of \cite{DeItKh} have proved that the maximum value $B_0(3,n)$ of $b_0(X)$ satisfies:
$$\frac{1}{4}(n-1)(n+5)-2<B_0(3,n)\leq \frac{3}{2}l(l-1)+2$$
where $l=\lfloor \frac{1}{2}n\rfloor+1$. Notice in particular that $\frac{1}{4}\leq\liminf \frac{B_0(n,3)}{n^2}\leq \frac{3}{8}$ as $n$ goes to infinity. \end{example}

\begin{example}[$k=4$]\label{ex:four} This is the first case where we need to take into account the complexity of the singular points of $\Sigma_\eps$. As promised, we give a simplified proof of part ii) of Theorem \ref{main} for this case, with the scope of getting the reader acquainted to the idea of that proof. Let $p\in \Q_{n+1}$ be the positive definite form given by Lemma \ref{perturb}, then by Lemma \ref{union} and Agrachev's bound we get:
$$b(X)\leq  \sum_{\nu\leq j\leq \mu-1}b( \Omega_j(\eps))+O(n)$$
where the $\Omega_{j}(\eps)$ are now subsets of the sphere $S^3.$
Corollary 9 says that each of them is a manifold with boundary; let us rename as above such boundaries: $C_j=\partial \Omega_{\nu+j-1}(\eps)$ for $j=1,\ldots, l=\mu-\nu.$ Lemma \ref{boundary2} allows now to use $\frac{1}{2}b(C_j)$ instead of $b(\Omega_{\nu+j-1}(\eps))$ in the previous bound, getting:
$$b(X)\leq \frac{1}{2}\sum_{j=1}^lb(C_j)+O(n)$$
Now we have that $\Sigma_\eps$ is a surface on $S^3$ given by $C_1\cup\cdots\cup C_l$, but this union is not disjoint since singular points may occur. These singular points are isolated, since their union (if nonempty) has codimension $3$ on the sphere $S^3$. The set $\Sigma_\eps^{(2)}=\textrm{Sing}(\Sigma_\eps)$ equals exactly the set of points where two different $C_j$ intersect. On the other hand if $|i-j|\geq2$ then $C_j\cap C_i=\emptyset$, since any point on this intersection would have kernel at least of dimension three. Thus $\Sigma_\eps$ is made up taking the abstract disjoint union of the sets $C_1, \ldots, C_l$ and identifying the points on $C_j\cap C_{j+1}$ for $j=1, \ldots, l-1.$ This identification procedure can increase the number of the generators of the fundamental group; the number of connected components instead can decrease of at most $b(\textrm{Sing}(\Sigma_\eps)),$ which is to say:
$$b(\Sigma_\eps)\geq b\bigg(\coprod_{j=1}^lC_j\bigg)-b(\textrm{Sing}(\Sigma_\eps))$$ 
The following picture shows an example how this identification procedure works: $\Sigma_\eps$ has two singular points and is obtained by glueing the disjoint union of $C_1$ and $C_2$ along two copies of these singular points (one copy is on $C_1$ and one on $C_2$).
\vspace{0.3cm}
\begin{center}
\fontsize{15}{0}
\scalebox{0.55} 
{
\begin{pspicture}(0,-2.148847)(22.819946,2.1288471)
\psbezier[linewidth=0.04](5.1113057,1.0390087)(4.85164,1.3660076)(4.392474,1.6463987)(3.717313,1.7790587)(3.0421524,1.9117188)(0.68332964,1.899532)(0.34166482,0.84969366)(0.0,-0.20014468)(1.0796608,-1.4909295)(2.6239858,-1.4909295)(4.1683106,-1.4909295)(4.564642,-1.2155621)(5.0429726,-0.50993305)
\psbezier[linewidth=0.04](6.429306,-0.50993305)(6.866637,-0.68203765)(7.2133727,-1.0719202)(7.8642983,-1.1639307)(8.5152235,-1.2559413)(11.097085,-1.2170762)(11.168515,-0.36126745)(11.239946,0.49454126)(10.829948,2.0888472)(8.886195,1.9793557)(6.9424405,1.8698643)(6.770971,1.2283239)(6.494541,1.0155697)
\psbezier[linewidth=0.04](2.2686543,-0.2552181)(2.2686543,-0.7600582)(3.4989479,-0.7853002)(3.4989479,-0.2804601)
\psbezier[linewidth=0.04](2.326893,-0.4192911)(2.5311434,-0.08149369)(3.229179,-0.028040027)(3.4261494,-0.45715412)
\psbezier[linewidth=0.04](0.95666146,0.58631426)(0.95666146,0.05860586)(2.0536885,0.30267102)(2.0636556,0.8445869)
\psbezier[linewidth=0.04](0.9953732,0.41480902)(1.1311399,0.76790804)(1.8943169,1.0734297)(2.0237641,0.6157441)
\psbezier[linewidth=0.04](8.506628,0.674147)(8.506628,0.16930687)(9.736921,0.14406487)(9.736921,0.648905)
\psbezier[linewidth=0.04](8.564867,0.51007396)(8.769117,0.8478714)(9.467153,0.90132505)(9.664124,0.47221094)
\psbezier[linewidth=0.04](5.097639,1.0217983)(4.85164,0.79806226)(4.2776437,0.5226948)(4.236644,0.19569601)
\psbezier[linewidth=0.04](5.0429726,-0.50993305)(4.6466413,-0.3894598)(4.2093105,-0.18293421)(4.1819773,0.50548434)
\psbezier[linewidth=0.04](6.497639,1.0190088)(7.0169697,0.5715367)(7.3859677,0.64037853)(7.4953003,0.36501113)
\psbezier[linewidth=0.04](6.4429727,-0.52993304)(6.97597,0.17569602)(7.5226336,-0.16851328)(7.508967,0.5199053)
\psbezier[linewidth=0.04](18.071306,1.0590087)(17.81164,1.3860075)(17.352474,1.6663986)(16.677313,1.7990588)(16.002151,1.9317188)(13.64333,1.919532)(13.301664,0.86969364)(12.96,-0.18014467)(14.03966,-1.4709295)(15.583985,-1.4709295)(17.128311,-1.4709295)(17.524641,-1.1955621)(18.002974,-0.48993304)
\psbezier[linewidth=0.04](17.989305,-0.48993304)(18.426638,-0.6620377)(18.773373,-1.05192)(19.424297,-1.1439307)(20.075224,-1.2359413)(22.657085,-1.1970762)(22.728516,-0.34126747)(22.799946,0.51454127)(22.389948,2.1088471)(20.446196,1.9993557)(18.502441,1.8898643)(18.33097,1.2483239)(18.05454,1.0355697)
\psbezier[linewidth=0.04](15.228654,-0.23521808)(15.228654,-0.7400582)(16.458948,-0.7653002)(16.458948,-0.26046008)
\psbezier[linewidth=0.04](15.286893,-0.39929113)(15.491143,-0.061493687)(16.189178,-0.008040028)(16.386148,-0.43715414)
\psbezier[linewidth=0.04](13.916661,0.60631424)(13.916661,0.07860586)(15.013688,0.322671)(15.023656,0.8645869)
\psbezier[linewidth=0.04](13.955373,0.43480903)(14.09114,0.787908)(14.854317,1.0934297)(14.983764,0.63574415)
\psbezier[linewidth=0.04](20.066628,0.694147)(20.066628,0.18930689)(21.29692,0.16406487)(21.29692,0.668905)
\psbezier[linewidth=0.04](20.124866,0.53007394)(20.329117,0.8678714)(21.027153,0.921325)(21.224123,0.49221095)
\psbezier[linewidth=0.04](18.057638,1.0417984)(17.81164,0.81806225)(17.237644,0.5426948)(17.196644,0.215696)
\psbezier[linewidth=0.04](18.002974,-0.48993304)(17.606642,-0.36945978)(17.16931,-0.16293421)(17.141977,0.5254844)
\psbezier[linewidth=0.04](18.057638,1.0590087)(18.57697,0.6115367)(18.945967,0.68037856)(19.0553,0.40501112)
\psbezier[linewidth=0.04](18.002974,-0.48993304)(18.53597,0.215696)(19.082634,-0.12851328)(19.068966,0.5599053)
\psdots[dotsize=0.12](5.099947,1.0488482)
\psdots[dotsize=0.12](5.019947,-0.5111518)
\psdots[dotsize=0.12](6.439947,-0.5111518)
\psdots[dotsize=0.12](6.499947,1.0288482)
\psline[linewidth=0.027999999cm,linestyle=dashed,dash=0.16cm 0.16cm](5.099947,1.0488482)(6.479947,1.0088482)
\psline[linewidth=0.027999999cm,linestyle=dashed,dash=0.16cm 0.16cm](5.019947,-0.5111518)(6.419947,-0.5111518)
\usefont{T1}{ptm}{m}{n}
\rput(2.5014021,-1.9261518){$C_1$}
\usefont{T1}{ptm}{m}{n}
\rput(8.761402,-1.9261518){$C_2$}
\usefont{T1}{ptm}{m}{n}
\rput(18.091402,-1.9261518){$\Sigma_\eps$}
\usefont{T1}{ptm}{m}{n}
\rput(12.131402,0.41384822){$\longrightarrow$}
\end{pspicture} 
}
\end{center}
\vspace{0.3cm}
Plugging this into the above inequality for $b(X)$ we get:
$$b(X)\leq \frac{1}{2}\big\{b(\Sigma_\eps)+b(\textrm{Sing}(\Sigma_\eps))\big\}+O(n)$$
Proposition \ref{sphericalbound} says that both $\Sigma_\eps$ and $\textrm{Sing}(\Sigma_\eps)$ have complexity bounded by $16n^3$ and thus:
\begin{equation}\label{four}b(X)\leq 16n^3+O(n).\end{equation}
On the other hand we notice that if $X$ is the real part of a smooth complete intersection of four real quadrics in $\CP^n$, then $b(X)\leq \frac{2}{3}n^3+O(n)$; thus the above bound is sharp only in its shape. By the above discussion on the topology of determinantal varieties, we expect (\ref{four}) can be improved.\end{example}

\end{document}